\numberwithin{equation}{section} 
\numberwithin{figure}{section} 
\theoremstyle{plain}
\theoremstyle{plain}
\newtheorem{thm}{Theorem}
  \theoremstyle{plain}
  \newtheorem{prop}[thm]{Proposition}
  \theoremstyle{plain}
  \newtheorem{lem}[thm]{Lemma}
\def\C{\mathbb C}
\newcommand{\abs}[1]{| #1 |}
\newcommand{\Abs}[1]{\left| #1\right|}
\def\bcases{\begin{cases}}
\def\bar{\overline}
\newcommand{\br}[1]{\left(#1\right)}
\newcommand{\bk}[1]{\left[#1\right]}
\def\ecases{\end{cases}}
\newcommand{\cl}{\overline}
\def\dbar{\overline\partial}
\newcommand{\dd}{\delta}
\newcommand{\CC}{\mathbf{C}}
\newcommand{\e}{\epsilon}
\newcommand{\Frac}{\displaystyle\frac}
\newcommand{\im}{\text{\rm Im}\,}
\newcommand{\norm}[1]{\left\| #1\right\|}
\newcommand{\p}{\partial}
\newcommand{\R}{\mathbb R}
\newcommand{\re}{\text{\rm Re}\,}
\newcommand{\set}[1]{\left\{ #1\right\}}
\def\sm{\setminus}
\newcommand{\To}{\longrightarrow}
\newcommand{\W}{\Omega}
\def\w{\omega}
\newcommand{\z}{\zeta}
\newtheorem{main theorem}{Main Theorem}
\newtheorem*{problem}{Problem}
\newtheorem{problem 1}{Problem 1}
\newtheorem{problem 2}{Problem 2}
\newtheorem{problem 3}{Problem 3}
\theoremstyle{definition}
\newtheorem{example}{Example}
\newcommand{\bea}{\begin{eqnarray*}}
\newcommand{\eea}{\end{eqnarray*}}
\newcommand{\be}{\begin{equation}}
\newcommand{\ee}{\end{equation}}
\newcommand{\bmult}{\begin{multline}}
\newcommand{\emult}{\end{multline}}
\title[Supnorm Estimate]
{On Supnorm Estimates for $\cl\partial$ on infinite type convex domains in $\C^2$}
\author{John Erik Forn\ae ss, Lina Lee, Yuan Zhang }
\begin{document}
\begin{abstract}
In this paper, we study the $\overline{\partial}$ equation on some
convex domains of infinite type in $\C^{2}$. In detail, we prove
that sup norm estimates hold for infinite exponential type domains
provided the exponent is less than 1. 
\end{abstract}
\maketitle

\section{Introduction}

Let $\W$ be a smooth bounded domain in $\C^{n}$. Given a smooth
$\bar{\p}$ closed $(0,1)$-form $f$, one fundamental question is
to study the supnorm estimates for the solutions of $\bar{\p}u=f$. A positive answer
 is well known when $\W$ is strictly convex or strongly pseudoconvex,
by constructing integral formulas for $\bar{\p}$ equations. Supnorm
and Hölder estimates have been established by Grauert and Lieb \cite{GL},
Henkin \cite{henkin} and Kerzman \cite{Kerzman} etc. Indeed, supnorm
and Hölder estimates with order $\frac{1}{m}$ still hold if $\W\subset\C^{n}$
is convex of finite type with type $m$ (see \cite{DF} \cite{DFF}).
However, for infinite type convex domains, even supnorm estimates
are still
 unknown. We discuss in this paper some examples of convex
domains of infinite type in $\C^{2}$. In particular, for the bidisk
rounded off by infinite exponential type hypersurface, we prove if
the exponent is less than $1$, the solutions given by the integral representation
to the $\bar{\p}$ equations have supnorm estimates. However, whether the exponent $1$ is optimal for the supernorm estimate is
still unclear to us.

\subsection{Infinite Type convex domains in $\C^{2}$}

\begin{problem} Assume that $\W$ is a bounded convex domain in $\C^{2}.$
Can one solve the $\overline{\partial}-$equation with supnorm estimates
on $\W?$ More precisely, does there exist a constant $C=C(\W)$ so
that whenever $f=\sum_{i=1}^{2}f_{i}(z)d\overline{z}_{i}$ on $\W$
for bounded measurable functions $f_{i}$ on $\W$, and $\overline{\partial}f=0$
in the sense of distributions, then is
there a measurable function
$u(z)$ on $\W$ so that $\overline{\partial}u=f$ and $\|u\|_{\infty}\leq C\|f\|_{\infty}.$
\end{problem}

We note that we put no restrictions on the smoothness of the boundary
of $\W.$ We note that if we moreover would like to prove that the
constant $C(\W)$ only depends on the diameter of $\W$, which is
the case in the corresponding problem for $L^{2}$ estimates \cite{Hormander},
then it suffices to study the case when the domain has smooth boundary
and is strongly convex.

There is a natural integral kernel to solve the $\overline{\partial}$
problem, namely the Henkin Kernel \cite{henkin}:

\begin{multline}
4\pi^{2}u(z)=\int_{\z\in\p\W}\frac{\rho_{\z_{1}}(\cl\z_{2}-\cl z_{2})-\rho_{\z_{2}}(\cl\z_{1}-\cl z_{1})}{\bk{\rho_{\z_{1}}(z_{1}-\z_{1})+\rho_{\z_{2}}(z_{2}-\z_{2})}\abs{\z-z}^{2}}f\wedge\w(\z)\\
+\int_{\z\in\W}\frac{f_{1}(\cl\z_{1}-\cl z_{1})+f_{2}(\cl\z_{2}-\cl z_{2})}{\abs{\z-z}^{4}}\w(\cl\z)\wedge\w(\z)=Hf+Kf\label{eq:Henkinformula}\end{multline}
 where $\rho$ is a $C^{1}$ defining function of $\W$ and $\w(\z)=d\z_{1}\wedge d\z_{2}$.

For smooth finite type convex domains in $\C^{2}$, the result has
been known for some time (see \cite{Range}). In that paper, Range
used Skoda's technique to construct a different Cauchy-Fantappié kernel
and carried out the
Hölder estimates for $\dbar$ equations.

Henkin in \cite{henkin2} proved the theorem in the case of the bidisc.
In this case, the integral formula is still valid although the domain
does not have smooth boundary.

The methods in the two cases are different. In the finite type case
one can directly estimate the kernel H and show it is uniformly in
$L^{1}$ for points $z\in\W.$ In other words one can estimate the
integral of the absolute value. For the bidisc case, this approach
fails. Instead Henkin uses an argument involving integration by parts
which essentially takes the integral over the flat face $\{|\z_{1}|=1\}$
to an integral over $\{|\z_{2}|=1\}$ and vice versa. For these new
integrals one can integrate the absolute values of the integrands
to prove uniform $L^{1}$ estimates. This idea has been carried over
to more general polyhedra, see \cite{henkin2} even in higher dimension.
It is not known whether one can solve for infinite type smooth convex
domains. 

In this paper we will investigate the case of some convex domains
which have a relatively open part of the boundary which is Levi flat
and another part which is strongly convex. For such domains one can
split the boundary into three pieces. One which is the flat points,
where one can use the method from Henkins bidisc result, one which
is a compact uniformly convex part and a third one which contains
strongly convex points near the flat part. The main difficulty is
to deal with the third part.

This difficulty is already apparent in the case of smooth domains
which are strongly convex except for one infinitely flat point. We
will discuss this case and show that there is a critical exponent
which decides absolute integrability of the Henkin kernel. This information
motivates how to find a rounded off polydisc where one can solve $\overline{\partial}$
with sup norm estimates.

Our main case is the following:

\begin{example} Let $\chi:\R^{+}\cup\set0\To\R^+$ be a smooth function
such that $\chi''(t)\ge0$ everywhere, $\chi''(t)>0$ for all $t\in(1,1+a)$,
and \[
\chi(t)=\begin{cases}
1, & t\in[0,1]\\
1+\exp\br{-\frac{1}{(t-1)^{\frac{\alpha}{2}}}}, & t\in(1,1+\e)\\
t-\eta, & t\ge1+a\end{cases},\]
 where $a>\e, \eta>0$ are small numbers such that $\chi', \chi'',\chi'''>0$.

Let us define a domain $\W\subset\C^{2}$ as follows: \[
\W=\set{\rho(z_{1},z_{2})=\chi(\abs{z_{1}}^{2})+\abs{z_{2}}^{2}<4}.\]
 \end{example}

We show that we can solve $\overline{\partial}$ with supnorm estimates
on this domain. Another interesting example is the flattened ball:
$\W=\{|z|^{2}+|w|^{2}<4,\re w<1\}.$ In this case it is not known
whether one has supnorm estimates. The problem seems to be the boundary
points $(0,1\pm i\sqrt{3})$ where the complex gradients of the functions
$(|z|^{2}+|w|^{2},\re w)$ are not complex linearly independent.

The paper is organized as follows:

In the next section we recall the proof of Henkins bidics theorem
and the strongly convex case. Then in Section 3 we discuss domains
with totally real flat parts. In Section 4 we discuss the example above. We
remark that as an application of this we get supnorm estimates for
some rounded bidisc.

\section{Background Results}

\subsection{Henkin's integral formula on a bidisc}

\begin{prop}\label{henkin2} Let $\W=\set{\abs{z_{1}}<1,\abs{z_{2}}<1}$.
There is a constant $C>0$, so that for any
 $f=f_{1}d\cl z_{1}+f_{2}d\cl z_{2}$ and $\bar{\p}f=0$ on
$\W$, $\|f\|_\infty < \infty$.

 Then \begin{multline}\label{bidisk}
u(z)=\frac{3}{4\pi^{2}}\int_{\W}\frac{f_{1}(\cl\z_{1}-\cl z_{1})+f_{2}(\cl\z_{2}-\cl z_{2})}{\abs{\z-z}^{4}}d\cl\z_{1}\wedge d\cl\z_{2}\wedge d\z_{1}\wedge d\z_{2}\\
+\frac{1}{4\pi^{2}}\int_{\abs{\z_{2}}=1,\abs{\z_{1}}<1}\frac{f_{1}}{\z_{1}-z_{1}}\frac{\cl\z_{2}-\cl z_{2}}{\abs{\z-z}^{2}}d\cl\z_{1}\wedge d\z_{1}\wedge d\z_{2}+\frac{i}{2\pi}\int_{\abs{\z_{2}}<1}\frac{f_{2}(z_{1},\z_{2})}{\z_{2}-z_{2}}d\cl\z_{2}\wedge d\z_{2}\\
-\frac{1}{4\pi^{2}}\int_{\abs{\z_{1}}=1,\abs{\z_{2}}<1}\frac{f_{2}}{\z_{2}-z_{2}}\frac{\cl\z_{1}-\cl z_{1}}{\abs{\z-z}^{2}}d\cl\z_{2}\wedge d\z_{1}\wedge d\z_{2}+\frac{i}{2\pi}\int_{\abs{\z_{1}}<1}\frac{f_{1}(\z_{1},z_{2})}{\z_{1}-z_{1}}d\cl\z_{1}\wedge d\z_{1}\end{multline}
 gives a solution for $\bar{\p}u=f$ on $\W,$ $\|u\|_\infty \leq C \|f\|_\infty.$\\
 \end{prop}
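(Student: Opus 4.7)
The plan is to prove the proposition in two steps: (a) derive \eqref{bidisk} from the general Henkin representation \eqref{eq:Henkinformula} applied to the bidisc (despite its non-smoothness), which simultaneously shows $u$ is well-defined and solves $\bar\partial u=f$; and (b) show that each of the five resulting terms is uniformly bounded by $C\|f\|_\infty$.

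For (a), I would use the piecewise defining function $\rho=|\zeta_j|^2-1$ on the face $\{|\zeta_j|=1\}$, so that $\rho_{\zeta_k}=\delta_{jk}\bar\zeta_j$, and observe that the Shilov torus $|\zeta_1|=|\zeta_2|=1$ has zero measure in $\partial\Omega$. Substituting into \eqref{eq:Henkinformula}, on $\{|\zeta_2|=1\}$ only the $f_1\,d\bar\zeta_1$-piece of $f\wedge\omega(\zeta)$ survives (since $d\bar\zeta_2\wedge d\zeta_2=0$ on the circle) and the kernel collapses to $(\bar\zeta_1-\bar z_1)/[(\zeta_2-z_2)|\zeta-z|^2]$. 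Using the algebraic decomposition
\[
\frac{\bar\zeta_1-\bar z_1}{(\zeta_2-z_2)|\zeta-z|^2}=-\frac{\bar\zeta_2-\bar z_2}{(\zeta_1-z_1)|\zeta-z|^2}+\frac{1}{(\zeta_1-z_1)(\zeta_2-z_2)},
\]
which follows from $|\zeta-z|^2=(\zeta_1-z_1)(\bar\zeta_1-\bar z_1)+(\zeta_2-z_2)(\bar\zeta_2-\bar z_2)$, the first piece produces the second term of \eqref{bidisk}; the second piece, integrated in $\zeta_2$ by Cauchy--Pompeiu together with $\bar\partial f=0$, collapses to the one-variable Cauchy integral $\frac{i}{2\pi}\int_{|\zeta_2|<1}\frac{f_2(z_1,\zeta_2)}{\zeta_2-z_2}\,d\bar\zeta_2\wedge d\zeta_2$. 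The face $\{|\zeta_1|=1\}$ is handled symmetrically to produce the remaining two terms. For merely bounded measurable $f$, the Cauchy--Pompeiu step is justified by approximating $f$ with smooth $\bar\partial$-closed forms.

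For (b), I would estimate each of the five terms separately. The volume integral has kernel of order $|\zeta-z|^{-3}$, hence uniformly $L^1$ over the bounded four-dimensional region. The two 1D Cauchy transforms on the disk are bounded by $C\|f\|_\infty$ by Fubini. The delicate terms are the two 3D boundary integrals. For the second term, I bound $|\bar\zeta_2-\bar z_2|\cdot|\zeta-z|^{-2}\le|\zeta-z|^{-1}$, set $\zeta_2=e^{i\theta}$, and switch to polar coordinates $\zeta_1-z_1=re^{i\phi}$, so that $dA(\zeta_1)/|\zeta_1-z_1|=dr\,d\phi$. Then, applying $(r^2+s^2)^{-1/2}\le 2/(r+s)$ with $s=|e^{i\theta}-z_2|$, the integral is controlled by $\int_0^{C}\int_0^{2\pi}\frac{d\theta\,dr}{r+|e^{i\theta}-z_2|}$; the inner integral in $\theta$ is $O(1+|\log(r+1-|z_2|)|)$, which is integrable in $r$ with a bound independent of $z_2\in\cl{\D}$. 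The other 3D boundary integral is symmetric.

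The main obstacle is precisely this last sup-norm estimate: a naive factorization of the kernel into a 2D Cauchy integral in $\zeta_1$ times a 1D Cauchy integral on the circle $|\zeta_2|=1$ would produce a $\log(1-|z_2|)$ divergence as $z_2\to\partial\D$, so one must retain the joint decay $|\zeta-z|^{-1}$ across both complex directions before invoking Fubini. A secondary difficulty is the rigorous justification of the Cauchy--Pompeiu step in (a) when $f$ is only bounded, which is handled by a standard mollification argument.
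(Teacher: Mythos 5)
Your step (b) is fine: the bound $\frac{|\cl\z_{2}-\cl z_{2}|}{|\z-z|^{2}}\le\frac{1}{|\z-z|}$ followed by the polar/Fubini argument with the logarithmic $\theta$-integral gives a uniform bound, and it is a legitimate variant of the paper's estimate, which instead uses $|\z-z|^{2}\gtrsim|\z_{1}-z_{1}|^{2/3}|\z_{2}-z_{2}|^{4/3}$ so that the face kernel is dominated by the manifestly uniformly integrable product $|\z_{1}-z_{1}|^{-5/3}|\z_{2}-z_{2}|^{-1/3}$.

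The gap is in step (a): your pointwise kernel identity plus one Cauchy--Pompeiu application does not yield \eqref{bidisk}. The clearest symptom is the coefficient: \eqref{bidisk} carries the volume integral with factor $\frac{3}{4\pi^{2}}$, while the Henkin representation \eqref{eq:Henkinformula} supplies it with factor $\frac{1}{4\pi^{2}}$ only, and none of your operations (algebraic splitting of the face kernel, Cauchy--Pompeiu in one variable, use of $\dbar f=0$) ever creates additional Bochner--Martinelli volume terms, so the two extra copies are unaccounted for. In the paper they arise from Stokes' theorem applied on each flat face after excising $B(z_{1},\e)$: the $\p/\p\cl\z_{1}$ derivative falling on $\frac{1}{|\z-z|^{2}}$, and a second integration by parts after trading $\p f_{2}/\p\cl\z_{1}$ for $\p f_{1}/\p\cl\z_{2}$, produce one full extra copy of the volume integral per face. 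Moreover, the claimed ``collapse'' of the product-kernel piece $\int_{|\z_{2}|=1,|\z_{1}|<1}\frac{f_{1}}{(\z_{1}-z_{1})(\z_{2}-z_{2})}\,d\cl\z_{1}\wedge d\z_{1}\wedge d\z_{2}$ is not correct as stated: Cauchy--Pompeiu in $\z_{2}$ produces $2\pi i f_{1}(\z_{1},z_{2})$ --- hence the \emph{fifth} term of \eqref{bidisk}, not the third --- plus a double integral of $\frac{\p f_{1}/\p\cl\z_{2}}{(\z_{1}-z_{1})(\z_{2}-z_{2})}$ over the bidisc; after $\dbar f=0$ and a second Cauchy--Pompeiu in $\z_{1}$ this leaves, besides the Cauchy transform of $f_{2}(z_{1},\cdot)$, the cross-face integral $\int_{|\z_{1}|=1,|\z_{2}|<1}\frac{f_{2}}{(\z_{1}-z_{1})(\z_{2}-z_{2})}$, which does not occur in \eqref{bidisk} and is not negligible: it is exactly the piece that, when treated by the Stokes argument, carries the missing volume contributions. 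To close the argument you must either track all of these leftover terms and show they reassemble into $\frac{2}{4\pi^{2}}$ times the volume integral plus the remaining terms of \eqref{bidisk} (which essentially reproduces the paper's integration-by-parts proof), or abandon the pointwise identity and run the Stokes/integration-by-parts derivation on each face directly, as the paper does. The mollification remark for merely bounded $f$ also needs a word (e.g.\ dilate the bidisc slightly before smoothing so that $\dbar$-closedness is preserved), though this is minor.
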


 From now on, we also denote $f\leq C g$ for some constant $C$ by $f \lesssim g $, for simplification. In \cite{henkin2}, the details of the proof was not
given. For the completeness of our result, we give the detailed computation
here. At the end of the section, we will also show the supnorm estimate
of the $\bar{\p}$ equation.

\begin{proof} The usual Henkin's integral formula on a domain $\W=\set{\rho<0}\subset\C^{2}$
is as follows:

\begin{multline}
4\pi^{2}u(z)=\int_{\p\W}\frac{\rho_{\z_{1}}(\cl\z_{2}-\cl z_{2})-\rho_{\z_{2}}(\cl\z_{1}-\cl z_{1})}{\bk{\rho_{\z_{1}}(z_{1}-\z_{1})+\rho_{\z_{2}}(z_{2}-\z_{2})}\abs{\z-z}^{2}}f\wedge d\z_{1}\wedge d\z_{2}\\
+\int_{\W}\frac{f_{1}(\cl\z_{1}-\cl z_{1})+f_{2}(\cl\z_{2}-\cl z_{2})}{\abs{\z-z}^{4}}d\cl\z_{1}\wedge d\cl\z_{2}\wedge d\z_{1}\wedge d\z_{2}\end{multline}

Applying this formula on bidisk, we get \[
4\pi^{2}u(z)=\int_{\W}\cdots+\int_{\p\W}\cdots=\int_{\W}\cdots+\int_{\abs{\z_{1}}=1,\abs{\z_{2}}<1}\cdots+\int_{\abs{\z_{2}}=1,\abs{\z_{1}}<1}\cdots=A_{1}+A_{2}+A_{3},\]
 where \begin{gather*}
A_{1}=\int_{\W}\frac{f_{1}(\cl\z_{1}-\cl z_{1})+f_{2}(\cl\z_{2}-\cl z_{2})}{\abs{\z-z}^{4}}d\cl\z_{1}\wedge d\cl\z_{2}\wedge d\z_{1}\wedge d\z_{2},\\
A_{2}=\int_{\abs{\z_{1}}=1,\abs{\z_{2}}<1}\frac{\cl\z_{1}(\cl\z_{2}-\cl z_{2})}{\cl\z_{1}(z_{1}-\z_{1})\abs{\z-z}^{2}}f\wedge\w(\z)=\int_{\abs{\z_{1}}=1,\abs{\z_{2}}<1}\frac{(\cl\z_{2}-\cl z_{2})}{(z_{1}-\z_{1})\abs{\z-z}^{2}}f_{2}d\cl\z_{2}\wedge\w(\z),\\
A_{3}=\int_{\abs{\z_{2}}=1,\abs{\z_{1}}<1}\frac{\cl\z_{2}(\cl\z_{1}-\cl z_{1})}{\cl\z_{2}(z_{2}-\z_{2})\abs{\z-z}^{2}}f\wedge\w(\z)=\int_{\abs{\z_{2}}=1,\abs{\z_{1}}<1}\frac{(\cl\z_{1}-\cl z_{1})}{(z_{2}-\z_{2})\abs{\z-z}^{2}}f_{1}d\cl\z_{1}\wedge\w(\z)\end{gather*}
 and $\w(\z)=d\z_{1}\wedge d\z_{2}$.

Now let us look at $A_{2}$. We may pick a small $\e>0$ such that
$B(z_{1},\e)=\set{\z:\abs{\z-z_{1}}<\e}\subset D_{1}=\set{\z_{1}:\abs{\z_{1}}<1}$.
Then we apply Stokes' Theorem on $D_{1}\sm B(z_{1},\e)$ for the boundary
integral on $\abs{\z_{1}}=1$: \[
\begin{split}A_{2}= & \int_{\abs{\z_{1}}=1,\abs{\z_{2}}<1}\frac{(\cl\z_{2}-\cl z_{2})}{(z_{1}-\z_{1})\abs{\z-z}^{2}}f_{2}d\cl\z_{2}\wedge\w(\z)\\
= & \int_{\abs{\z_{1}-z_{1}}=\e,\abs{\z_{2}}<1}\frac{(\cl\z_{2}-\cl z_{2})}{(z_{1}-\z_{1})\abs{\z-z}^{2}}f_{2}d\cl\z_{2}\wedge\w(\z)+\int_{D_{1}\sm B(z_{1},\e),\abs{\z_{2}}<1}\frac{\p}{\p\cl{\z_{1}}}\br{\cdots}\w(\cl\z)\wedge\w(\z)\\
= & \int_{\abs{\z_{1}-z_{1}}=\e,\abs{\z_{2}}<1}\frac{(\cl\z_{2}-\cl z_{2})}{(z_{1}-\z_{1})\abs{\z-z}^{2}}f_{2}d\cl\z_{2}\wedge\w(\z)\\
 & -\int_{D_{1}\sm B(z_{1},\e),\abs{\z_{2}}<1}\frac{\cl\z_{2}-\cl z_{2}}{(z_{1}-\z_{1})}\cdot\frac{\z_{1}-z_{1}}{\abs{\z-z}^{4}}f_{2}\w(\cl\z)\wedge\w(\z)\\
 & +\int_{D_{1}\sm B(z_{1},\e),\abs{\z_{2}}<1}\frac{\cl\z_{2}-\cl z_{2}}{(z_{1}-\z_{1})}\cdot\frac{1}{\abs{\z-z}^{2}}\frac{\p f_{2}}{\p\cl\z_{1}}\w(\cl\z)\wedge\w(\z)=B_{1}+B_{2}+B_{3}\end{split}
\]
 As $\e\to0$, we have \begin{gather*}
\begin{split}B_{1} & \To2\pi i\int_{\abs{\z_{2}}<1}\frac{f_{2}(z_{1},\z_{2})}{\z_{2}-z_{2}}d\cl\z_{2}\wedge d\z_{2},\\
B_{2} & \To\int_{\W}\frac{f_{2}(\cl\z_{2}-\cl z_{2})}{\abs{\z-z}^{4}}\w(\cl\z)\wedge\w(\z).\end{split}
\end{gather*}
 We use $\dbar f=0$ and integration by parts to get

\begin{align*}
B_{3} & =\int_{D_{1}\sm B(z_{1},\e),\abs{\z_{2}}<1}\frac{\cl\z_{2}-\cl z_{2}}{(z_{1}-\z_{1})}\cdot\frac{1}{\abs{\z-z}^{2}}\frac{\p f_{2}}{\p\cl\z_{1}}\w(\cl\z)\wedge\w(\z)\\
 & =\int_{D_{1}\sm B(z_{1},\e),\abs{\z_{2}}<1}\frac{\cl\z_{2}-\cl z_{2}}{(z_{1}-\z_{1})}\cdot\frac{1}{\abs{\z-z}^{2}}\frac{\p f_{1}}{\p\cl\z_{2}}\w(\cl\z)\wedge\w(\z)\\
 & =\int_{\abs{\z_{2}}=1,D_{1}\sm B(z_{1},\e)}\frac{f_{1}}{\z_{1}-z_{1}}\frac{\cl\z_{2}-\cl z_{2}}{\abs{\z-z}^{2}}d\cl\z_{1}\wedge\w(\z)+\int_{\abs{\z_{2}}<1,D_{1}\sm B(z_{1},\e)}\frac{f_{1}(\cl\z_{1}-\cl z_{1})}{\abs{\z-z}^{4}}\w(\cl\z)\wedge\w(\z)\\
 & \To\int_{\abs{\z_{2}}=1,\abs{\z_{1}}<1}\frac{f_{1}}{\z_{1}-z_{1}}\frac{\cl\z_{2}-\cl z_{2}}{\abs{\z-z}^{2}}d\cl\z_{1}\wedge\w(\z)+\int_{\W}\frac{f_{1}(\cl\z_{1}-\cl z_{1})}{\abs{\z-z}^{4}}\w(\cl\z)\wedge\w(\z).\end{align*}

Therefore we have \begin{multline}
A_{2}=B_{1}+B_{2}+B_{3}\\
=2\pi i\int_{\abs{\z_{2}}<1}\frac{f_{2}(z_{1},\z_{2})}{\z_{2}-z_{2}}d\cl\z_{2}\wedge d\z_{2}+\int_{\W}\frac{f_{2}(\cl\z_{2}-\cl z_{2})}{\abs{\z-z}^{4}}\w(\cl\z)\wedge\w(\z)\\
+\int_{\abs{\z_{2}}=1,\abs{\z_{1}}<1}\frac{f_{1}}{\z_{1}-z_{1}}\frac{\cl\z_{2}-\cl z_{2}}{\abs{\z-z}^{2}}d\cl\z_{1}\wedge\w(\z)+\int_{\W}\frac{f_{1}(\cl\z_{1}-\cl z_{1})}{\abs{\z-z}^{4}}\w(\cl\z)\wedge\w(\z)\\
=A_{1}+\int_{\abs{\z_{2}}=1,\abs{\z_{1}}<1}\frac{f_{1}}{\z_{1}-z_{1}}\frac{\cl\z_{2}-\cl z_{2}}{\abs{\z-z}^{2}}d\cl\z_{1}\wedge\w(\z)+2\pi i\int_{\abs{\z_{2}}<1}\frac{f_{2}(z_{1},\z_{2})}{\z_{2}-z_{2}}d\cl\z_{2}\wedge d\z_{2}\end{multline}

Calculation of $A_{3}$ is similar. Hence we get (\ref{bidisk}) as
in the Proposition.

 We show next that the solution $u$ in (\ref{bidisk})
is bounded if $|f|$ is bounded. It suffices to estimate the second
and fourth expression on the right side. Since they are symmetric,
it suffices to do the second one.

Notice that $
a+b\gtrsim a^{2/3}b^{1/3}$ if $a>0, b>0$.
 Hence we have $|z-\z|^{2}\gtrsim|z_{1}-\z_{1}|^{2/3}|z_{2}-\z_{2}|^{4/3}$.
Therefore \begin{align*}
\frac{1}{|\z_{1}-z_{1}|}\frac{|\cl\z_{2}-\cl z_{2}|}{|\z-z|^{2}} & \lesssim\frac{1}{|\z_{1}-z_{1}|}\frac{|\cl\z_{2}-\cl z_{2}|}{|\z_{1}-z_{1}|^{2/3}|\z_{2}-z_{2}|^{4/3}}\\
 & =\frac{1}{|\z_1-z_1|^{5/3}}\frac{1}{|\z_2-z_2|^{1/3}}\end{align*}

Then the integrand in the second expression of (\ref{bidisk}) is integrable.\end{proof}


\subsection{Henkin's integral formula on a strongly convex domain}

The estimate of the Henkin's integral formula is well known for strongly
pseudoconvex domains (see \cite{henkin}). Here we include the proof
for strongly convex domains first because the method we use is different
from that in \cite{henkin} and also because the estimtates for other
special domains in section 3 and 4 are based on the estimate we show
below. Hence it would not inconvenience the reader if we skip the
estimate in section 3 and 4 which are the same as in the case for
strongly convex domains.
\begin{prop}
\label{henkinstrconv}Let $\W=\set{\rho<0}\subset\C^{n}$ be a smoothly
bounded strongly convex domain. Let $\dbar u=f$ and $\dbar f=0$
on $\W$, for $f\in C^{\infty}(\W)$. Then we have $\norm u_{\infty}\lesssim\norm f_{\infty}$, where $\norm u_{\infty}=\sup\set{|u(z)|:z\in\cl\W}$. \end{prop}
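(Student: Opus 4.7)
The plan is to bound both integral operators appearing in (\ref{eq:Henkinformula}) from $L^\infty(\W)$ to $L^\infty(\W)$, which reduces to showing that each kernel is uniformly in $L^1$ in the $\z$-variable. The volume piece $Kf$ is immediate: since the Bochner--Martinelli-type kernel satisfies
$$
\left|\frac{f_1(\cl\z_1 - \cl z_1) + f_2(\cl\z_2 - \cl z_2)}{|\z - z|^4}\right| \;\lesssim\; \frac{\|f\|_\infty}{|\z - z|^{2n-1}},
$$
and $\W$ is bounded, $\int_\W |\z - z|^{-(2n-1)}\,dV(\z) \lesssim 1$ uniformly in $z \in \cl\W$, so $\|Kf\|_\infty \lesssim \|f\|_\infty$.

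The substance of the proof is the boundary term $Hf$. The crucial ingredient, and the only place where strong convexity enters essentially, is a lower bound for the Levi-type function
$$
\Phi(z,\z) \;:=\; \sum_j \rho_{\z_j}(z_j - \z_j)
$$
appearing in the denominator. Taylor expanding $\rho$ at $\z \in \p\W$ and using that the real Hessian of $\rho$ is uniformly positive definite, for $z \in \cl\W$ in a small neighborhood of $\z$ one obtains
$$
|\Phi(z,\z)| \;\gtrsim\; -\rho(z) + |\im\Phi(z,\z)| + |\z - z|^2.
$$

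Given this estimate, the next step is to introduce anisotropic coordinates on $\p\W$ near the projection of $z$. Writing $d := \dist(z,\p\W)$, and using coordinates $(t,s)$ on $\p\W$ with $t \approx \im\Phi$ playing the role of the totally-real tangential direction and $s$ collecting the complex-tangential directions, the surface measure is comparable to $dt\,dS(s)$ and $|\z - z|^2 \gtrsim d + |t| + |s|^2$. The numerator of $H$ is bounded by $|\z - z|$, so the integrand is controlled by $|\Phi(z,\z)|^{-1}|\z - z|^{-1}$. After substituting the denominator lower bound, one is left with a model integral in the variables $(t,s)$ whose uniform finiteness is a direct computation: integrate first in $t$ on $\R$, then in $s$ using polar coordinates in $\R^{2n-2}$. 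A partition of unity on $\p\W$ assembles these local estimates into the global bound $\int_{\p\W}|H(z,\z)|\,d\sigma(\z) \lesssim 1$.

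The main obstacle is the lower bound for $|\Phi|$ together with the bookkeeping of the anisotropic coordinates; once those are in place, the resulting integral estimate is routine. The reason this argument breaks down in the infinite-type setting of the later sections is precisely that the quadratic lower bound $|\Phi| \gtrsim |\z - z|^2$ degrades to a subquadratic one governed by $\chi$, and the corresponding model integral ceases to converge without exploiting additional structure of the defining function.
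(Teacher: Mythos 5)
Your proposal is essentially the paper's own proof: reduce to the boundary term $Hf$, use the Taylor expansion of $\rho$ at $\z\in\p\W$ and strong convexity to get $|\re F(z,\z)|\gtrsim|z-\z|^2$ (plus the distance to the boundary) for $z\in\cl\W$ near $\z$ as in (\ref{eq:ReF}), choose boundary coordinates in which $\im F$ is one tangential variable, integrate in that variable first to produce a logarithm, and finish with polar coordinates in the two complex-tangential variables, exactly as in (\ref{eq:I-strconv})--(\ref{eq:ReFstrconv}); the treatment of $Kf$ is the same standard observation. One slip worth correcting: the displayed inequality $|\z-z|^2\gtrsim d+|t|+|s|^2$ is false as written (already $|\z-z|^2\ge d$ fails when $\z$ is the nearest boundary point and $d$ is small, and $|\im F|$ is generally of order $|z-\z|$, not $|z-\z|^2$); what the model integral actually needs is $|F(z,\z)|\gtrsim d+|t|+|s|^2$ together with $|\z-z|\gtrsim|s|$, both of which follow from the lower bound for $|F|$ you already stated, so the computation goes through unchanged.
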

\begin{proof}
It suffices to show that the integral $Hf$ in (\ref{eq:Henkinformula})
is bounded. Recall the formula for $Hf$: \[
Hf(z)=\int_{\z\in\p\W}\frac{\rho_{\z_{1}}(\cl\z_{2}-\cl z_{2})-\rho_{\z_{2}}(\cl\z_{1}-\cl z_{1})}{\bk{\rho_{\z_{1}}(z_{1}-\z_{1})+\rho_{\z_{2}}(z_{2}-\z_{2})}\abs{\z-z}^{2}}f\wedge\w(\z).\]
 Let $ $$F(z,\z)=\rho_{\z_{1}}(z_{1}-\z_{1})+\rho_{\z_{2}}(z_{2}-\z_{2})$.
Then we have\[
\abs{Hf(z)}\lesssim\int_{\z\in\p\W}\frac{|f_{1}|d\cl{\z_{1}}+|f_{2}|d\cl{\z_{2}}}{(|\im F|+|\re F|)|\z-z|}\wedge\w(\z).\]
 If we let $z=(x_{1}+ix_{2},x_{3}+ix_{4})$ and $\z=(t_{1}+it_{2},t_{3}+it_{4})$,
then \[
\re F(z,\z)=\frac{1}{2}\sum_{j=1}^{4}\frac{\p\rho}{\p t_{j}}(x_{j}-t_{j}).\]
 For fixed $\z\in\p\W$, the Taylor expansion of $\rho$ at $\z$
evaluated at $z\in\cl\W$ is as follows:\begin{align*}
\rho(z) & =2\re F(z,\z)+\frac{1}{2}\sum_{j,k=1}^{4}\frac{\p^{2}\rho}{\p t_{j}\p t_{k}}(x_{j}-t_{j})(x_{k}-t_{k})+O(|z-\z|^{3})\\
 & \ge2\re F(z,\z)+C|z-\z|^{2}+O(|z-\z|^{3}).\end{align*}
 Hence for $\dd>0$ small enough, we have\begin{equation}
|\re F(z,\z)|\ge C'|z-\z|^{2},\quad\text{for }z\in\cl\W,|z-\z|<\dd,\label{eq:ReF}\end{equation}
 since $\re F(z,\z)\le0$. We may assume $\norm{\nabla\rho(\z)}=1$
for all $\z\in\p\W$. Then $|\re F(z,\z)|$ is comparable to the square of the distance
from $z\in\cl\W$ to the real tangent plane to $\p\W$ at $\z\in\p\W$.
Hence if $z\in\cl\W$ and $|z-\z|>\dd$$ $, then $|\re F(z,\z)|\ge\inf\set{|\re F(z,\z)|:|z-\z|=\dd,\, z\in\cl\W}\gtrsim\dd^{2}$.
Therefore we have\[
|Hf(z)|\lesssim\int_{\z\in\p\W,|z-\z|\le\dd}\cdots+\int_{\z\in\p\W,|z-\z|>\dd}\cdots=I+II\]
 and $II\lesssim\norm f_{\infty}$. Therefore it is sufficient to
show $I\lesssim\norm f_{\infty}$. Let $z\in\cl\W$ be fixed such
that $S_{z,\dd}=\set{\z\in\p\W:|z-\z|\le\dd}\neq\emptyset$. May assume
$\dd>0$ is small enough such that $\p\rho/\p t_{3}\approx1$ on $ $$S_{z,\dd}$.
Then we have $d\rho=\rho_{t_{1}}dt_{1}+\rho_{t_{2}}dt_{2}+\rho_{t_{3}}dt_{3}+\rho_{t_{4}}dt_{4}$
and \[
dt_{3}=\frac{-1}{\rho_{t_{3}}}(\rho_{t_{1}}dt_{1}+\rho_{t_{2}}dt_{2}+\rho_{t_{4}}dt_{4}),\quad\text{on }S_{z,\dd}.\]
 Therefore $|d\cl\z_{1}\wedge\w(\z)|\approx dt_{1}dt_{2}dt_{4}$ and
$|d\cl\z_{2}\wedge\w(\z)|\approx dt_{1}dt_{2}dt_{4}$. So we have\begin{equation}
I\lesssim\norm f_{\infty}\int_{(t_{1}-x_1)^{2}+(t_{2}-x_2)^{2}+(t_{4}-x_2)^{2}<\dd}\frac{1}{(|\im F|+|\re F|)|\z-z|}dt_{1}dt_{2}dt_{4}.\label{eq:I-general}\end{equation}
 It is easy to check that $\p\im F/\p t_{4}\neq0$ near $S_{z,\dd}$.
Hence the Jacobian of the coordinate change mapping $\Phi(t)=(t_{1},t_{2},t_{4})\to(t_{1},t_{2},\im F)$
does not vanish on $S_{z,\dd}$. We have\begin{align}
I & \lesssim\norm f_{\infty}\int_{(t_{1}-x_1)^{2}+(t_{2}-x_2)^{2}+(t_{4}-x_2)^{2}<\dd}\frac{1}{(|t_{4}|+|\re F|)|\z_{1}-z_{1}|}dt_{1}dt_{2}dt_{4}\label{eq:I-strconv}\\
 & \lesssim\norm f_{\infty}\int_{(t_{1}-x_1)^{2}+(t_{2}-x_2)^{2}<\dd}\frac{\Big|\ln(|\re F|+\sqrt{\dd})-\ln|\re F|\Big|}{|\z_{1}-z_{1}|}dt_{1}dt_{2}\nonumber \\
 & \lesssim\norm f_{\infty}\int_{(t_{1}-x_1)^{2}+(t_{2}-x_2)^{2}<\dd}\frac{|\ln|\re F||}{|\z_{1}-z_{1}|}dt_{1}dt_{2}\nonumber \end{align}
By (\ref{eq:ReF}), we get\begin{align}
I & \lesssim\norm f_{\infty}\int_{(t_{1}-x_1)^{2}+(t_{2}-x_2)^{2}<\dd}\frac{|\ln|\re F||}{|\z_{1}-z_{1}|}dt_{1}dt_{2}\label{eq:ReFstrconv}\\
 & \lesssim\norm f_{\infty}\int_{(t_{1}-x_1)^{2}+(t_{2}-x_2)^{2}<\dd}\frac{|\ln(|z_{1}-\z_{1}|^{2}+|z_{2}-\z_{2}|^{2})|}{|\z_{1}-z_{1}|}dt_{1}dt_{2}\nonumber \\
 & \lesssim\norm f_{\infty}\int_{(t_{1}-x_1)^{2}+(t_{2}-x_2)^{2}<\dd}\frac{|\ln|z_{1}-\z_{1}||}{|\z_{1}-z_{1}|}dt_{1}dt_{2}.\nonumber \end{align}
Let us use polar cooridnates for $(t_{1},t_{2})$ such that $|z_{1}-\z_{1}|=r$
and $t_{1}=x_{1}+r\cos\theta$. Then we get\[
I\lesssim\norm f_{\infty}\int_{r<\delta}\frac{|\ln r|}{r}r\, dr\lesssim\norm f_{\infty}.\]

\end{proof}

\section{Some convex domains with totally real flat parts}

In this section we shall study smoothly bounded convex domains in
$\C^{2}$ which is strongly convex except
on some totally real flat boundary pieces. We assume those flat parts to be of exponentially infinite type.

As one can see in the proof of Proposition \ref{henkinstrconv}, to
discuss the integrability of the Henkin kernel $H$ in (\ref{eq:Henkinformula}),
it is enough to consider the case when $z$ is close to the boundary
and the integral on a small boundary piece close to $z$. For the domains
we consider in this section, if $z$ is close to the strongly convex boundary
point and if one can choose a small neighborhood around $z$ such
that the boundary piece in that neighborhood is strongly convex, then
we can use the same estimates as in (\ref{eq:I-strconv}) and (\ref{eq:ReFstrconv})
and the integral is finite on that boundary piece. Therefore it is enough to consider the case when $z$ is close to
the exponentially flat points, and evaluate
the integral on $S_{z,\dd}=\set{\z\in\p\W:|z-\z|<\dd}$.

\begin{lem}
\label{lem:boundconvex}If a $C^{2}$-function $\phi:\R^{+}\cup\set0\To\R^{+}\cup\set0$
satisfies $\phi'(t)\ge0$ and $\phi''(t)\ge0$ for all $t\in\R^{+}\cup\set0$,
then $\psi
(x):=\phi(x_{1}^{2}+\cdots+x_{n}^{2})$ is a convex function
on $\R^{n}$. \end{lem}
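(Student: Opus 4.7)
The plan is to reduce the statement to a direct Hessian computation (or, equivalently, to a well-known composition principle for convex functions). Write $r^2 = x_1^2 + \cdots + x_n^2$ so that $\psi(x) = \phi(r^2)$. Since $\psi$ is $C^2$ on $\R^n$, convexity is equivalent to the Hessian $D^2\psi(x)$ being positive semidefinite at every point.

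First I would compute derivatives by the chain rule. One gets
\[
\frac{\partial \psi}{\partial x_i}(x) = 2 x_i\, \phi'(r^2), \qquad \frac{\partial^2 \psi}{\partial x_i \partial x_j}(x) = 2 \delta_{ij}\, \phi'(r^2) + 4 x_i x_j\, \phi''(r^2).
\]
In matrix form this reads $D^2 \psi(x) = 2\phi'(r^2)\, I_n + 4\phi''(r^2)\, x x^{T}$. For any vector $v \in \R^n$,
\[
v^{T} D^2\psi(x)\, v = 2\phi'(r^2)\, |v|^2 + 4\phi''(r^2)\, (x\cdot v)^2,
\]
which is a sum of two nonnegative terms by the hypotheses $\phi'\ge 0$ and $\phi''\ge 0$. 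Hence $D^2\psi(x)$ is positive semidefinite for every $x\in\R^n$, and $\psi$ is convex.

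There is no real obstacle here; the lemma is essentially a composition fact. Alternatively, I would note that $\phi$ is nondecreasing and convex on $[0,\infty)$ (from $\phi'\ge 0$ and $\phi''\ge 0$), while the map $x\mapsto |x|^2$ is convex on $\R^n$. Then $\psi = \phi\circ(|\cdot|^2)$ is the composition of a nondecreasing convex function with a convex function, which is convex. This gives the same conclusion without any explicit computation, and may be preferred for brevity.
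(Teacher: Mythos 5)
Your proof is correct; the paper itself omits the argument, stating only that it is straightforward, and your Hessian computation $D^{2}\psi(x)=2\phi'(|x|^{2})I_{n}+4\phi''(|x|^{2})\,x x^{T}$, which is positive semidefinite since $\phi'\ge 0$ and $\phi''\ge 0$, is exactly the standard verification the authors had in mind. The alternative remark, that $\psi$ is the composition of the nondecreasing convex function $\phi$ with the convex function $x\mapsto |x|^{2}$, is also valid and has the small advantage of not requiring $\phi$ to be $C^{2}$, but either route fully settles the lemma.
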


The proof is straightforward and is omitted here.

\begin{lem}
\label{conv} Let $\phi$ be a smooth function on $\R^{+}\cup\set0$
such that $\phi(0)=\phi'(0)=0$, $\phi''(t)\ge0$, and $\phi'''(t)\ge0$,
for all $t\in\R^{+}\cup\set0$. Let $p,q\in\R^{+}\cup\set0$. Then
we have \[
\phi(q)-\phi(p)-\phi'(p)(q-p)\ge\phi(q-p),\quad\text{if }0\le p\le q,\]
and \[
\phi(q)-\phi(p)-\phi'(p)(q-p)\ge\phi''\br{\frac{p+q}{2}}\br{\frac{p-q}{2}}^{2},\quad\text{if }0\le q<p.\]

\end{lem}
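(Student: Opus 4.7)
The plan is to reduce both inequalities to integral identities for $\phi(q)-\phi(p)-\phi'(p)(q-p)$ expressed in terms of $\phi''$, and then exploit the monotonicity of $\phi''$ which follows from $\phi''' \ge 0$. The basic identity I would use is obtained by integrating twice and applying Fubini:
\begin{equation*}
\phi(q)-\phi(p)-\phi'(p)(q-p)=\int_{p}^{q}\phi''(u)(q-u)\,du,
\end{equation*}
valid for all $p,q\ge 0$ (the sign works itself out when $q<p$, with the integrand automatically nonnegative once we reverse the limits).

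For the first case $0\le p\le q$, I would change variables $u=p+v$ to obtain
\begin{equation*}
\phi(q)-\phi(p)-\phi'(p)(q-p)=\int_{0}^{q-p}\phi''(p+v)\,(q-p-v)\,dv,
\end{equation*}
and compare with the analogous representation of $\phi(q-p)$, which, using $\phi(0)=\phi'(0)=0$, is
\begin{equation*}
\phi(q-p)=\int_{0}^{q-p}\phi''(v)\,(q-p-v)\,dv.
\end{equation*}
Since $\phi'''\ge 0$ makes $\phi''$ nondecreasing and $p\ge 0$, we have $\phi''(p+v)\ge \phi''(v)$ pointwise, and the first inequality follows by termwise comparison.

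For the second case $0\le q<p$, reversing the orientation of the integral and integrating by parts (or applying Fubini the other way) gives
\begin{equation*}
\phi(q)-\phi(p)-\phi'(p)(q-p)=\int_{q}^{p}\phi''(u)(u-q)\,du.
\end{equation*}
Here I would restrict to the sub-interval $[(p+q)/2,p]$ on which $\phi''(u)\ge \phi''\!\left(\frac{p+q}{2}\right)$ (using again that $\phi''$ is nondecreasing). A direct computation gives
\begin{equation*}
\int_{(p+q)/2}^{p}(u-q)\,du=\frac{3(p-q)^{2}}{8},
\end{equation*}
so the integral is at least $\tfrac{3}{8}\phi''\!\left(\frac{p+q}{2}\right)(p-q)^{2}$, which is comfortably larger than the claimed lower bound $\phi''\!\left(\frac{p+q}{2}\right)\!\left(\frac{p-q}{2}\right)^{2}=\tfrac{1}{4}\phi''\!\left(\frac{p+q}{2}\right)(p-q)^{2}$.

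The main thing to get right, and the only nonobvious step, is choosing the sub-interval $[(p+q)/2,p]$ in the second case: integrating over the whole $[q,p]$ and bounding $\phi''(u)$ below by $\phi''\!\left(\frac{p+q}{2}\right)$ fails, because on the lower half of the interval that bound goes the wrong way. Restricting to the upper half throws away a factor, but the factor $3/8$ that survives is still larger than the $1/4$ that the statement asks for.
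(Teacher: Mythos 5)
Your argument is correct, and it takes a genuinely different route from the paper's. You base everything on the Taylor identity with integral remainder, $\phi(q)-\phi(p)-\phi'(p)(q-p)=\int_p^q\phi''(u)(q-u)\,du$, compare it for $p\le q$ with $\phi(q-p)=\int_0^{q-p}\phi''(v)(q-p-v)\,dv$ via the pointwise bound $\phi''(p+v)\ge\phi''(v)$ (valid since $\phi'''\ge0$ makes $\phi''$ nondecreasing), and for $q<p$ you restrict the integral $\int_q^p\phi''(u)(u-q)\,du$ to $[(p+q)/2,p]$, getting the lower bound $\frac38\,\phi''\!\left(\frac{p+q}{2}\right)(p-q)^2$, which indeed dominates the claimed $\frac14\,\phi''\!\left(\frac{p+q}{2}\right)(p-q)^2$; all steps check out, including the computation $\int_{(p+q)/2}^p(u-q)\,du=\frac{3(p-q)^2}{8}$. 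The paper instead argues by auxiliary functions: for $p\le q$ it sets $g(p,s)=\phi(p+s)-\phi(p)-s\phi'(p)-\phi(s)$ with $s=q-p$, notes $g(0,s)=0$ and $\partial g/\partial p=\phi'(p+s)-\phi'(p)-s\phi''(p)\ge0$ by convexity of $\phi'$; for $q<p$ it sets $h(q,s)=\phi(q+s)-s\phi'(q+s)$ and uses that $h$ is nonincreasing and concave in $s$ to compare the secant over $[s/2,s]$ with the tangent at $s/2$, which produces exactly the constant $1/4$. Both proofs hinge on the same monotonicity of $\phi''$, but your integral-remainder comparison is arguably more transparent, yields the slightly better constant $3/8$ in the second case, and incidentally sidesteps the paper's computation of $\partial^2 h/\partial s^2$ (stated there as $-\phi'''(q+s)$, whereas it is actually $-s\phi'''(q+s)-\phi''(q+s)$, still $\le0$, so the paper's concavity claim stands); the paper's differential argument, in turn, avoids remainder integrals and lands on the bound in precisely the stated form.
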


\begin{proof}
Let us assume $0\le p\le q$ and let $q=p+s$. We want to show the
following: \[
g(p,s)=\phi(p+s)-(\phi(p)+s\phi'(p))-\phi(s)\ge0,\quad\forall p,s\ge0.\]
 Obviously $g(0,s)=0.$ Moreover $\p g/\p p=\phi'(p+s)-\phi'(p)-s\phi''(p)\ge0$
since $\phi'$ is a convex function on $\R^{+}\cup\set0$. Therefore
$g(p,s)\ge0$ for all $p,s\ge0$.

Now assume $0\le q<p$ and let $p=q+s$, $s>0$. Let $h(q,s)=\phi(q+s)-\phi'(q+s)s$.
Then we have $h(q,0)=\phi(q)$ and $\p h/\p s=-\phi''(q+s)s\le0$.
Hence we get $h(q,0)-h(q,s)\ge h(q,s/2)-h(q,s)$. Note that $\p^{2}h/\p s^{2}=-\phi'''(q+s)\le0$.
Therefore we have \[
\frac{h\br{q,s/2}-h(q,s)}{-s/2}\le\frac{\p h}{\p s}(q,s/2).\]
Hence\[
\phi(q)-\phi(p)-\phi'(p)(q-p)=h(q,0)-h(q,s)\ge\frac{s^{2}}{4}\phi''\br{q+\frac{s}{2}}=\phi''\br{\frac{p+q}{2}}\br{\frac{p-q}{2}}^{2}.\]\end{proof}


We construct two smoothly bounded convex domains $\W_{1},\W_{2}\subset\C^{2}$
as follows. Locally, we study $\p\W_{1}\cap B(0,\e)\subset\set{\rho(z)=\re z_{2}+\exp(-1/|z_{1}|^{\alpha})=0}$
and $\p\W_{2}\cap B(0,\e)\subset \set{\rho(z)=\re z_{2}+\exp(-1/|\re z_{1}|^{\alpha})=0}$. Here $\epsilon$ is small enough so that $\exp(-1/|z_{1}|^{\alpha})$
and $\exp(-1/|\re z_{1}|^{\alpha})$ are convex if $|z_{1}|<\e$. As one can see, the boundaries of $\W_{1}$ and $\W_{2}$ are strongly
convex everywhere except along the imaginary $z_2$ axis in $\W_1$ and along both imaginary $z_1, z_2$ axis in $\W_2$. Moreover,  in the origin in $\W_1$ and in the imaginary $z_1$ axis in $\W_2$, the boundaries are of exponentially infinite type. To get bounded convex domains in $\CC^2$,  we need the following patching Proposition:

\begin{prop}
\label{pro:roundoff}Let $\W\subset\C^{n}$ be a smoothly bounded
domain and $0\in\p\W$. Then there exists a domain $\tilde \W\subset\subset\C^{n}$
such that $\p{\tilde \W}$ is smooth and $\p\tilde \W\cap B(0,\e)=\p\W\cap B(0,\e)$
for some small $\e>0$ and that $\p\tilde \W$ is strongly convex except possibly
on $\p\tilde \W\cap B(0,2\e)$. Moreover if $\W$ is convex, then $\tilde \W$ can be chosen to be bounded convex. \end{prop}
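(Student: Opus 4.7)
The plan is to realize $\tilde\W$ as the sublevel set $\{\tilde\rho<0\}$ of a perturbation $\tilde\rho=\rho+\lambda\psi$, where $\rho$ is a suitable smooth defining function of $\W$ (convex when $\W$ is convex), $\psi$ is a strongly convex auxiliary function that vanishes to infinite order on $B(0,\e)$ and blows up at infinity, and $\lambda>0$ is taken large.

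For the defining function, fix a smooth defining function $\rho$ of $\W$ in a neighborhood of $\p\W$. When $\W$ is convex, take $\rho$ to be a smooth convex function on all of $\C^{n}$ with $\W=\{\rho<0\}$; this is classical and can be produced by smoothing the Minkowski gauge of $\W-p$ at an interior point $p$, or by regularizing the signed distance to $\p\W$ via infimal convolution with a narrow convex mollifier. In the non-convex case one only needs a smooth extension to $\C^{n}$ satisfying $\rho\le-c_{0}<0$ on a compact subset of $\W$ staying a fixed distance from $\p\W$. For the bump, pick a smooth convex $\chi:\R\to[0,\infty)$ that vanishes on $(-\infty,\e^{2}]$, is flat to infinite order at $t=\e^{2}$, satisfies $\chi'(t),\chi''(t)>0$ on $((3\e/2)^{2},\infty)$, and grows to $+\infty$. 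Set $\psi(z)=\chi(|z|^{2})$; a direct real Hessian computation yields eigenvalues $2\chi'(|z|^{2})$ tangentially and $4|z|^{2}\chi''(|z|^{2})+2\chi'(|z|^{2})$ radially, so $\psi$ is smooth and everywhere convex, vanishes to infinite order on $B(0,\e)$, is strongly convex on $\{|z|>3\e/2\}$, and tends to $+\infty$ at infinity.

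Put $\tilde\rho_{\lambda}=\rho+\lambda\psi$ and $\tilde\W:=\{\tilde\rho_{\lambda}<0\}$. The verifications are then as follows. Since $\psi$ vanishes to infinite order on $B(0,\e)$, we have $\tilde\rho_{\lambda}\equiv\rho$ there, giving $\p\tilde\W\cap B(0,\e)=\p\W\cap B(0,\e)$ as smooth hypersurfaces. Since $\psi\to+\infty$ at infinity, $\tilde\W$ is bounded. On $\{|z|>2\e\}\subset\{|z|>3\e/2\}$, for $\lambda$ large the real Hessian $\operatorname{Hess}(\tilde\rho_{\lambda})=\operatorname{Hess}(\rho)+\lambda\operatorname{Hess}(\psi)$ is positive definite on the compact set $\bar{\tilde\W}\cap\{|z|\ge 2\e\}$, so $\p\tilde\W$ is strongly convex off $B(0,2\e)$. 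In the convex case, $\tilde\rho_{\lambda}$ is a sum of convex functions, hence globally convex, so $\tilde\W$ is convex with nonvanishing gradient on its boundary by strict convexity. In the non-convex case, smoothness of $\p\tilde\W$ in the transition annulus $\{\e\le|z|\le 2\e\}$ follows because boundary points there satisfy $\rho=-\lambda\psi$, which for large $\lambda$ forces $\psi(z)$ to be very small and hence $|z|$ close to $\e$, placing $z$ close to $\p\W$ where $|\nabla\rho|\neq 0$ dominates the infinitesimal $\lambda\nabla\psi$.

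The main technical obstacle is the construction, in the convex case, of a genuinely smooth convex defining function on all of $\C^{n}$: the Minkowski gauge and the signed distance function are at best Lipschitz or $C^{1,1}$, and smoothing them by infimal convolution while preserving convexity and controlling the zero set (up to an inward perturbation one can absorb) requires care. The infinite-order flatness of $\chi$ at $t=\e^{2}$ is also essential so that $\p\tilde\W$ glues smoothly onto $\p\W\cap B(0,\e)$ without introducing spurious singularities or extraneous boundary components in the transition annulus.
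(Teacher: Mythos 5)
Your construction coincides with the paper's: the authors also take $\tilde\W=\set{\rho+M\psi(|z|^{2})<0}$, where $\psi$ vanishes on $[0,\e^{2}]$ (their explicit choice is $e^{-1/(t-\e^{2})}(t^{2}-\e^{4})$), is convex by Lemma \ref{lem:boundconvex}, has $\psi''$ bounded below beyond $4\e^{2}$ and $\psi\to\infty$, and $M$ is taken large; your verifications of coincidence on $B(0,\e)$, boundedness, strong convexity off $B(0,2\e)$, and convexity of the sum in the convex case are exactly theirs.

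The one step that does not hold as written is your smoothness argument in the transition annulus in the non-convex case. On $\p\tilde\W\cap\set{\e\le|z|\le2\e}$ the relation $\rho=-\lambda\psi$ only gives $\psi(z)\le C/\lambda$, hence $|z|$ close to $\e$; it does not place $z$ close to $\p\W$, because points just outside the sphere $\set{|z|=\e}$ may lie well inside $\W$, where $\rho\approx-c<0$ and an arbitrary smooth extension of $\rho$ may have small or vanishing gradient, so the claim that $|\nabla\rho|$ dominates $\lambda\nabla\psi$ there is unsupported. The conclusion can be salvaged: on $\set{\psi>0}$ one has $\p\tilde\W\subset\set{-\rho/\psi=\lambda}$, and a degenerate boundary point is precisely a critical point of $-\rho/\psi$ at level $\lambda$, so by Sard's theorem almost every large $\lambda$ gives a smooth boundary; and in the convex case (the only one the paper actually uses) the issue disappears, since a critical point of the convex function $\rho+\lambda\psi$ on its zero level set would be a global minimum and would force $\tilde\W=\emptyset$ --- convexity, not strict convexity, is what you should invoke there. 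To be fair, the paper's own proof is silent on this point, as well as on the global smooth convex defining function you spend effort on; in the paper's applications $\rho$ is an explicit globally convex formula, so neither difficulty arises there.
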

\begin{proof}
We use Lemma \ref{lem:boundconvex} with the following function:
\begin{equation}\label{bounds}
\psi(t)=\begin{cases}
0, & t\in[0,\e^{2}),\\
e^{-1/(t-\e^{2})}(t^{2}-\e^{4}), & t\ge\e^{2}.\end{cases}\end{equation}
Then $\psi:[0,\infty)\To[0,\infty)$ is a smooth convex function and we have
$\psi'(t)>0,
\psi''(t)>0$ if $t>\e^{2}$. Moreover, there exists $\beta>0$ such that $\psi''(t)>\beta$
if $t>4\e^{2}$. Let $\rho$ be a smooth local defining function of
$\W$ near $0$, i.e., $\W\cap U=\set{\rho<0}\cap U$ for some neighborhood
of $U$. We consider $\tilde{\W}=\set{z\in\C^{n}:\rho+M\psi(|z|^{2})<0}$
with some appropriate constant $M>0$. Since $\psi(t)\To\infty$ as
$t\to\infty$, clearly $\tilde{\W}\subset\subset\C^{n}$. Also $\tilde{\W}\cap B(0,\e)=\W\cap B(0,\e)$,
where $\e>0$ is chosen small enough that $B(0,\e)\subset U$. We
may choose $M$ large enough that $\tilde{\W}$ is strongly convex
on $\p\tilde{\W}\sm B(0,2\e)$. \end{proof}

Using Proposition \ref{pro:roundoff}, we now have constructed two
smoothly bounded convex domains $\W_{1},\W_{2}\subset\C^{2}$ such
that\begin{equation}  \label{domain}
\begin{split}
\W_{1} & =\set{\rho(z)=\re z_{2}+\exp(-1/|z_{1}|^{\alpha})+M\psi(|z|^{2})<0},\quad\text{and }\\
\W_{2} & =\set{\rho(z)=\re z_{2}+\exp(-1/|\re z_{1}|^{\alpha})+M\psi(|z|^{2})<0},\end{split} \end{equation}
where $\psi$ is given as in (\ref{bounds}).
\begin{thm}
\label{thm:convexinf}Let $\alpha<1$ on both of the two cases in (\ref{domain}), then the solution to the $\dbar$-equation $\dbar u=f$,
for $f=f_{1}d\cl z_{1}+f_{2}d\cl z_{2}$, $f\in C^{\infty}(\cl\W_{j})$
and $\dbar f=0$, satisfies $\norm u_{\infty}\lesssim\norm f_{\infty}$. \end{thm}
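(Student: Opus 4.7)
My plan is to adapt the proof of Proposition~\ref{henkinstrconv} to these infinite-type domains. Outside a small neighborhood of the flat locus (the origin for $\W_{1}$, the imaginary $z_{1}$-axis for $\W_{2}$) the boundary is strongly convex, so the argument of Proposition~\ref{henkinstrconv} applies verbatim there; it therefore suffices to bound
\[
I(z)=\int_{S_{z,\dd}}\frac{|f|}{(|\im F|+|\re F|)|\z-z|}\,dS(\z),\qquad S_{z,\dd}=\set{\z\in\p\W_{j}:|z-\z|\le\dd},
\]
when $z\in\cl\W_{j}$ sits close to the flat locus and $\dd$ is small. Inside $B(0,\e)$ the patching term $M\psi(|z|^{2})$ vanishes identically, so the defining function reduces to $\rho=\re z_{2}+\phi(|z_{1}|^{2})$ for $\W_{1}$ and $\rho=\re z_{2}+\phi(|\re z_{1}|^{2})$ for $\W_{2}$, with $\phi(s)=\exp(-s^{-\alpha/2})$.

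The decisive new ingredient is a lower bound on $|\re F|$ that replaces the strongly convex estimate $|\re F|\gtrsim|z-\z|^{2}$. I will describe the argument for $\W_{1}$; the bound for $\W_{2}$ is analogous with $|z_{1}|^{2}$ replaced by $(\re z_{1})^{2}$, the flat $\im z_{1}$ direction contributing only a bounded factor from the domain's diameter. Using $|z_{1}|^{2}-|\z_{1}|^{2}=2\re[\cl\z_{1}(z_{1}-\z_{1})]+|z_{1}-\z_{1}|^{2}$ together with $\re z_{2}\le-\phi(|z_{1}|^{2})$ for $z\in\cl\W_{1}$, one computes
\[
-2\re F(z,\z)\ge\bigl[\phi(|z_{1}|^{2})-\phi(|\z_{1}|^{2})-\phi'(|\z_{1}|^{2})(|z_{1}|^{2}-|\z_{1}|^{2})\bigr]+\phi'(|\z_{1}|^{2})|z_{1}-\z_{1}|^{2}.
\]
The bracketed convexity defect is precisely what Lemma~\ref{conv} controls; splitting into the cases $|\z_{1}|\le|z_{1}|$ and $|z_{1}|<|\z_{1}|$ and comparing $|z_{1}|^{2}-|\z_{1}|^{2}$ with $|z_{1}-\z_{1}|^{2}$ should yield $|\re F(z,\z)|\gtrsim\phi(c|z_{1}-\z_{1}|^{2})$ for some absolute $c>0$.

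With this lower bound in hand, the rest parallels Proposition~\ref{henkinstrconv}. Since $\p_{t_{4}}\im F\approx1/2\neq 0$, the coordinate change $(t_{1},t_{2},t_{4})\mapsto(t_{1},t_{2},\im F)$ has nonvanishing Jacobian, and integrating out the new $\im F$-variable produces a logarithm, reducing $I$ to
\[
I\lesssim\norm{f}_{\infty}\int_{(t_{1}-x_{1})^{2}+(t_{2}-x_{2})^{2}<\dd}\frac{|\ln|\re F||}{|\z_{1}-z_{1}|}\,dt_{1}\,dt_{2}.
\]
Inserting $|\re F|\gtrsim\phi(c|z_{1}-\z_{1}|^{2})=\exp(-c^{-\alpha/2}|z_{1}-\z_{1}|^{-\alpha})$ gives $|\ln|\re F||\lesssim|z_{1}-\z_{1}|^{-\alpha}$, so polar coordinates $r=|\z_{1}-z_{1}|$ reduce everything to
\[
I\lesssim\norm{f}_{\infty}\int_{0}^{\dd}\frac{dr}{r^{\alpha}}<\infty
\]
exactly when $\alpha<1$, which is the critical threshold in the theorem.

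The hard part will be producing the uniform lower bound $|\re F|\gtrsim\phi(c|z_{1}-\z_{1}|^{2})$. Lemma~\ref{conv} handles the bracketed defect, but in the regime $|\z_{1}|\sim|z_{1}|$ with $z_{1}$ differing from $\z_{1}$ only by angular rotation the bracket collapses; there the nonnegative term $\phi'(|\z_{1}|^{2})|z_{1}-\z_{1}|^{2}$ must take over. This works because $\phi'(s)$, while very small for small $s$, still decays only polynomially times $\phi$, whereas $\phi(cs)$ decays super-polynomially as $s\to 0$, so $\phi'(|\z_{1}|^{2})|z_{1}-\z_{1}|^{2}$ ultimately dominates $\phi(c|z_{1}-\z_{1}|^{2})$ in this transitional regime. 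Handling this case by case is the principal technical step; once it is done the final integral computation is essentially a recomputation of Proposition~\ref{henkinstrconv}'s estimate with the sharper $\phi$-decay inserted.
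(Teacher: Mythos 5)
Your overall architecture is the paper's: localize near the flat locus, run the strongly convex machinery of Proposition \ref{henkinstrconv} to reduce matters to $\int |\ln|\re F||\,/\,|z_1-\z_1|\,dt_1dt_2$, and then lower-bound $|\re F|$ through the convexity defect controlled by Lemma \ref{conv}. For $\W_1$ your route is genuinely a variant: you retain the extra nonnegative term $\phi'(|\z_1|^2)|z_1-\z_1|^2$ and aim at a single unified bound $|\re F|\gtrsim\phi(c|z_1-\z_1|^2)$, whereas the paper discards that term, gets two separate bounds (one by $\phi(|z_1|^2-|\z_1|^2)$, one by $\phi''$) and estimates two different integrals in Proposition \ref{pro:expabs}, absorbing the angular degeneracy $|z_1|=|\z_1|$ in a polar-coordinate computation. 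Your unified bound is in fact correct for $\dd,c$ small (when $|z_1-\z_1|^2\lesssim|\z_1|^2$ one has $\phi'(|\z_1|^2)|z_1-\z_1|^2\ge\phi(c|z_1-\z_1|^2)$ because the right side is $\exp(-c^{-\alpha/2}|z_1-\z_1|^{-\alpha})$ with $c^{-\alpha/2}>1$, and otherwise $|z_1|^2-|\z_1|^2\gtrsim|z_1-\z_1|^2$ so Lemma \ref{conv} applies directly), and it would give a cleaner finish than the paper's; but this case analysis is exactly the crux of the theorem for $\W_1$ and in your write-up it is only asserted with a heuristic, so as it stands the main estimate is unproved.

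The genuine failure is in your treatment of $\W_2$. There the bound cannot be phrased in terms of $|z_1-\z_1|$: near $0$ the boundary contains the totally real $2$-plane $\set{\re z_1=\re z_2=0}$, and for $z\in\cl\W_2$, $\z\in\p\W_2$ lying in it with different $\im z_1$ one has $\re F=0$ while $\phi(c|z_1-\z_1|^2)>0$. The best available analogue controls $|\re F|$ only through $|\re z_1-\re\z_1|$, giving $|\ln|\re F||\lesssim|\re z_1-\re\z_1|^{-\alpha}$; the flat $\im z_1$ direction then does \emph{not} contribute a bounded factor: integrating $1/|z_1-\z_1|$ over $t_2$ produces a factor of order $\ln\br{1/|x_1-t_1|}$, and one must finish with $\int|x_1-t_1|^{-\alpha}\,|\ln|x_1-t_1||\,dt_1<\infty$, or with the splitting $|z_1-\z_1|\gtrsim|x_1-t_1|^{\e}|x_2-t_2|^{1-\e}$, $\alpha+\e<1$, which is exactly how the paper handles it in Proposition \ref{pro:exprez1} (note the paper also needs the factor $|x_1+t_1|^{-\alpha/2}$ coming from $|x_1^2-t_1^2|$, handled by Cauchy--Schwarz). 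So the conclusion for $\W_2$ still holds when $\alpha<1$, but your displayed polar-coordinate computation simply does not apply to it and must be replaced; as written, the proposal proves neither the key lower bound for $\W_1$ nor the integral estimate for $\W_2$.
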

\begin{proof}
We show that the Henkin integral is bounded on $\W_{1}$ and $\W_{2}$. We will split the boundary into three types of pieces.
For strongly convex boundary pieces, we use the same method as in Proposition \ref{henkinstrconv}. For pieces in $\mathbf B(0,\epsilon)$, The estimate (\ref{eq:I-strconv}) still holds for $z$ and $\z$ close to $0$.
By the following Proposition \ref{pro:expabs} and Proposition \ref{pro:exprez1},
we will show the estimate is valid near $0$. For the pieces on $\p\mathbf B(0,\epsilon)\cap \W$ where the defining equation is not strictly convex, the same method as in Proposition \ref{pro:expabs} and Proposition \ref{pro:exprez1} can still be applied. Indeed, in the proof of those propositions we only used the estimates in Lemma \ref{conv}. The estimates of $|\re F|$ continue to be valid. We leave this part to the reader.


We follow the same method as in Proposition \ref{henkinstrconv} and
get (\ref{eq:I-strconv}). It comes down to estimating $|\re F|$
near $0$ for each of these two special cases, where \[
F(z,\z)=\rho_{\z_{1}}(z_{1}-\z_{1})+\rho_{\z_{2}}(z_{2}-\z_{2}).\]

\begin{prop}
\label{pro:expabs}Let $\W\cap B(0,\dd)=\set{\rho(z)=\re z_{2}+\exp\br{-1/|z_{1}|^{\alpha}}<0}$.
Then, for sufficiently small $\dd>0$, we have\[
\int_{\sqrt{t_{1}^{2}+t_{2}^{2}}<\dd}\frac{|\ln|\re F(z,\z)||}{|z_{1}-\z_{1}|}dt_{1}dt_{2}<\infty,\]
for $z\in\cl\W\cap B(0,\dd)$, if $\alpha<1$. \end{prop}
\begin{proof}
Let $\phi(t)=\exp\br{-1/t^{\alpha/2}}$. Then we have $\rho(z)=\re z_{2}+\phi(|z_{1}|^{2})$.
Since $\z\in\p\W$, we have $\re\z_{2}=-\phi(|\z_{1}|^{2})$. Hence
we get \begin{align*}
\re F & =\re\phi'(|\z_{1}|^{2})\cl\z_{1}(z_{1}-\z_{1})+\frac{1}{2}\re(z_{2}-\z_{2})\\
 & =\phi'(|\z_{1}|^{2})\re\cl\z_{1}(z_{1}-\z_{1})+\frac{1}{2}\br{\re z_{2}+\phi(|\z_{1}|^{2})}\end{align*}
Since $\re F\le0$ and $\re z_{2}+\phi(|z_{1}|^{2})\le0$, we have
\begin{align*}
|\re F| & =-\re F=\phi'(|\z_{1}|^{2})\bk{|\z_{1}|^{2}-\re\cl\z_{1}z_{1}}-\frac{1}{2}\phi(|\z_{1}|^{2})-\frac{1}{2}\re z_{2}\\
 & \ge\phi'(|\z_{1}|^{2})\bk{|\z_{1}|^{2}-\re\cl\z_{1}z_{1}}-\frac{1}{2}\phi(|\z_{1}|^{2})+\frac{1}{2}\phi(|z_{1}|^{2})\\
 & \ge\frac{1}{2}\bk{\phi'(|\z_{1}|^{2})(|\z_{1}|^{2}-|z_{1}|^{2})-\phi(|\z_{1}|^{2})+\phi(|z_{1}|)^{2}}\end{align*}
Apply Lemma \ref{conv} with $p=|\z_{1}|^{2}$ and $q=|z_{1}|^{2}$.
Then we get\begin{align*}
|\re F| & \ge\frac{1}{2}\phi(|z_{1}|^{2}-|\z_{1}|^{2}),\quad\text{if }|z_{1}|\ge|\z_{1}|\quad\text{and }\\
|\re F| & \ge\frac{1}{2}\phi''\br{\frac{|z_{1}|^{2}+|\z_{1}|^{2}}{2}}\br{\frac{|\z_{1}|^{2}-|z_{1}|^{2}}{2}}^{2}\quad\text{if }|z_{1}|<|\z_{1}|.\end{align*}
Hence we have\begin{multline*}
\int_{|t|<\dd}\frac{|\ln|\re F(z,\z)||}{|z_{1}-\z_{1}|}dt_{1}dt_{2}\lesssim\int_{|t|<\dd,|z_{1}|\ge|\z_{1}|}\frac{|\ln\phi(|z_{1}|^{2}-|\z_{1}|^{2})|}{|z_{1}-\z_{1}|}dt_{1}dt_{2}\\
+\int_{|t|<\dd,|z_{1}|<|\z_{1}|}\frac{|\ln\br{\phi''((|z_{1}|^{2}+|\z_{1}|^{2})/2)(|\z_{1}|^{2}-|z_{1}|^{2})^{2}/4}|}{|z_{1}-\z_{1}|}dt_{1}dt_{2}\\
\lesssim\int_{|t|<\dd}\frac{|\ln\phi(|z_{1}|^{2}-|\z_{1}|^{2})|}{|z_{1}-\z_{1}|}dt_{1}dt_{2}+\int_{|t|<\dd}\frac{|\ln\br{\phi''((|z_{1}|^{2}+|\z_{1}|^{2})/2)(|\z_{1}|^{2}-|z_{1}|^{2})^{2}/4}|}{|z_{1}-\z_{1}|}dt_{1}dt_{2}\\
=I+II.\end{multline*}
We shall show that $I<\infty$ and $II<\infty.$ Let us first consider
$I$: \begin{multline}
I\lesssim\int\frac{|\ln\phi(|z_{1}|^{2}-|\z_{1}|^{2})|}{|z_{1}-\z_{1}|}dt_{1}dt_{2}
\lesssim\int\frac{1}{\Abs{|z_{1}|^{2}-|\z_{1}|^{2}}^{\alpha/2}|z_{1}-\z_{1}|}dt_{1}dt_{2}\label{eq:eabs}\end{multline}
Let us use the polar coordinate centered at $z_{1}$ and let $\z_{1}=z_{1}+re^{i\theta}$.
Then we get\begin{align*}
I\lesssim&\int_{0}^{2\pi}\int_{0}^{1}[\frac{1}{r(r+(z_1 e^{-i\theta}+\bar z_1 e^{i\theta}))}]^{\alpha/2}dr d\theta\\
\lesssim& \int_{0}^{2\pi}\int_{0}^{1} [\frac{1}{r^2}+\frac{1}{(r+(z_1 e^{-i\theta}+\bar z_1 e^{i\theta}))^2}]^{\alpha/2}dr d\theta\\
\lesssim&  \int_{0}^{2\pi}\int_{0}^{1}(\frac{1}{r})^{\alpha}+ \big[\frac{1}{r+(z_1 e^{-i\theta}+\bar z_1 e^{i\theta})}\big]^{\alpha}dr d\theta  <\infty,\end{align*}
if $\alpha<1$.

Now let us consider $II$. We have \begin{align}
\phi'(t) & =\exp\br{-1/t^{\alpha/2}}\frac{\alpha}{2}\frac{1}{t^{1+\alpha/2}};\nonumber \\
\phi''(t) & =\exp\br{-1/t^{\alpha/2}}\frac{1}{t^{2+\alpha}}\frac{\alpha}{2}\bk{\frac{\alpha}{2}-\br{1+\frac{\alpha}{2}}t^{\alpha/2}}\nonumber \\
 & \ge C(\alpha)\exp\br{-1/t^{\alpha/2}}\frac{1}{t^{2+\alpha}},\label{eq:phi''}\end{align}
if we choose $\dd>0$ sufficiently small such that that $\phi''(t)\ge0$
for all $t<\dd^{2}$. Therefore we get \begin{align*}
II & \lesssim\int_{|t|<\dd}\br{\frac{1}{(|z_{1}|^{2}+|\z_{1}|^{2})^{\alpha/2}}+\Big|\ln||z_{1}|^{2}-|\z_{1}|^{2}|\Big|}\frac{1}{|z_{1}-\z_{1}|}dt_{1}dt_{2}\\
 & \lesssim\int_{|t|<\dd}\br{\frac{1}{|z_{1}-\z_{1}|^{\alpha}}}\frac{1}{|z_{1}-\z_{1}|}dt_{1}dt_{2}<\infty,\end{align*}
if $\alpha<1$.
\end{proof}

\begin{prop}
\label{pro:exprez1}Let $\W\cap B(0,\dd)=\set{\rho(z)=\re z_{2}+\exp\br{-1/|\re z_{1}|^{\alpha}}<0}$.
Then we have\[
\int_{\sqrt{t_{1}^{2}+t_{2}^{2}}<\dd}\frac{|\ln|\re F(z,\z)||}{|z_{1}-\z_{1}|}dt_{1}dt_{2}<\infty,\]
for $z\in\cl\W\cap B(0,\dd)$, if $\alpha<1$. \end{prop}
\begin{proof}
Let $\phi(t)=\exp\br{-1/t^{\alpha/2}}$. Then we have $\rho(z)=\re z_{2}+\phi(|\re z_{1}|^{2})$.
Since $\z\in\p\W$, we have $-\re\z_{2}=\phi(|\re\z_{1}|^{2})$. Hence
we get \begin{align*}
2\re F & =\phi'(t_{1}^{2})2t_{1}(x_{1}-t_{1})+(\re z_{2}-\re\z_{2})\\
 & =\phi'(t_{1}^{2})2t_{1}(x_{1}-t_{1})+\br{\re z_{2}+\phi(t_{1}^{2})}\end{align*}
Since $\re F\le0$ and $\re z_{2}+\phi(|z_{1}|^{2})\le0$, we have
\begin{align*}
|2\re F| & =-2\re F=2\phi'(t_{1}^{2})(t_{1}^{2}-t_{1}x_{1})-(\re z_{2}+\phi(t_{1}^{2}))\\
  & \ge\phi'(t_{1}^{2})\br{t_{1}^{2}-x_{1}^{2}}-\phi(t_{1}^{2})+\phi(x_{1}^{2}).\end{align*}
Apply Lemma \ref{conv} with $p=t_{1}^{2}$ and $q=x_{1}^{2}$. Then
we get \begin{align*}
|\re F| & \ge\frac{1}{2}\phi(x_{1}^{2}-t_{1}^{2})\quad\text{if }x_{1}^{2}\ge t_{1}^{2}\quad\text{and }\\
|\re F| & \ge\frac{1}{2}\phi''\br{\frac{x_{1}^{2}+t_{1}^{2}}{2}}\br{\frac{t_{1}^{2}-x_{1}^{2}}{2}}^{2}\quad\text{if }x_{1}^{2}<t_{1}^{2}.\end{align*}
Hence we have\begin{multline*}
\int_{|t|<\dd}\frac{|\ln|\re F(z,\z)||}{|z_{1}-\z_{1}|}dt_{1}dt_{2}\\
\lesssim\int_{|t|<\dd,|x_{1}|\ge|t_{1}|}\frac{|\ln\phi(x_{1}^{2}-t_{1}^{2})|}{|z_{1}-\z_{1}|}dt_{1}dt_{2}+\int_{|t|<\dd,|x_{1}|<|t_{1}|}\frac{|\ln\br{\phi''((x_{1}^{2}+t_{1}^{2})/2)(t_{1}^{2}-x_{1}^{2})^{2}/4}|}{|z_{1}-\z_{1}|}dt_{1}dt_{2}\\
\lesssim\int_{|t|<\dd}\frac{|\ln\phi(|x_{1}^{2}-t_{1}^{2}|)|}{|z_{1}-\z_{1}|}dt_{1}dt_{2}+\int_{|t|<\dd}\frac{|\ln\br{\phi''((x_{1}^{2}+t_{1}^{2})/2)(t_{1}^{2}-x_{1}^{2})^{2}/4}|}{|z_{1}-\z_{1}|}dt_{1}dt_{2}=I+II.\end{multline*}
We shall show that $I,II<\infty$. First let us consider $I$. If
we assume $\alpha<1$, then we can find $\e>0$ such that $\alpha+2\e<1$
and we have\begin{alignat*}{1}
I & \lesssim\int\frac{1}{|x_{1}^{2}-t_{1}^{2}|^{\alpha/2}|z_{1}-\z_{1}|}dt_{1}dt_{2}\\
 & =\int\frac{1}{|x_{1}+t_{1}|^{\alpha/2}|x_{1}-t_{1}|^{\alpha/2}|x_{1}-t_{1}|^{\e}|x_{2}-t_{2}|^{1-\e}}dt_{1}dt_{2}\\
 & \lesssim\br{\int_{|t_{1}|<1}\frac{1}{|x_{1}+t_{1}|^{\alpha}}dt_{1}}^{1/2}\br{\int_{|t_{1}|<1}\frac{1}{|x_{1}-t_{1}|^{\alpha+2\e}}dt_{1}}^{1/2}<\infty.\end{alignat*}
Let us consider $II$. From (\ref{eq:phi''}), we get\begin{align*}
II & \lesssim\int_{|t|<\dd}\br{\frac{1}{(x_{1}^{2}+t_{1}^{2})^{\alpha/2}}+|\ln|x_{1}^{2}-t_{1}^{2}||}\frac{1}{|z_{1}-\z_{1}|}dt_{1}dt_{2}\\
 & \lesssim\int_{|t|<\dd}\br{\frac{1}{|x_{1}-t_{1}|^{\alpha}}}\frac{1}{|x_{1}-t_{1}|^{\e}|x_{2}-t_{2}|^{1-\e}}dt_{1}dt_{2}<\infty,\end{align*}
if we choose $\e>0$ such that $\alpha+\e<1$. \end{proof}
The proof of Theorem 5 is thus complete. \end{proof}

\section{rounding off a ball cut by a cylinder}

Let's recall Example 1:

Let $\chi:\R^{+}\cup\set0\To\R^+$ be a smooth function such that $\chi''(t)\ge0$
everywhere, $\chi''(t)>0$ for all $t\in(1,1+a)$, and \[
\chi(t)=\begin{cases}
1, & t\in[0,1]\\
1+\exp\br{-\frac{1}{(t-1)^{\frac{\alpha}{2}}}}, & t\in(1,1+\e)\\
t-\eta, & t\ge1+a\end{cases},\]
 where $a>\e>0$ are small numbers, $0<\eta<a$. Let us define a domain
$\W\subset\C^{2}$ as follows: \[
\W=\set{\rho(z_{1},z_{2})=\chi(\abs{z_{1}}^{2})+\abs{z_{2}}^{2}<4}.\]
 \begin{prop} If $\alpha<1$, then the solution $u$ to the $\bar{\p}$ equation for the
domain in Example 1  has supnorm estimates $\|u\|_{\infty}\lesssim \|f\|_{\infty}$. \end{prop}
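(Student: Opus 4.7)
The plan is to apply Henkin's integral formula (\ref{eq:Henkinformula}) to $\W$ and estimate the boundary integral $Hf(z)$ for $z\in\cl\W$, since the volume term $Kf$ is trivially bounded by $\|f\|_\infty$. Decompose $\p\W = S_1 \cup S_2 \cup S_3$, where $S_1 = \set{\z\in\p\W : |\z_1|^2 \le 1}$ is the Levi-flat face (on which $\chi' \equiv 0$, hence $\rho_{\z_1} \equiv 0$), $S_2 = \set{\z\in\p\W : 1 < |\z_1|^2 < 1+a}$ is the transitional infinite-type part, and $S_3 = \set{\z\in\p\W : |\z_1|^2 \ge 1+a}$ is uniformly strongly convex (since $\chi'' \ge \beta > 0$ there by construction). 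The three pieces will be handled by three different techniques already developed in the paper.

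Over $S_3$ the boundary is smooth and uniformly strongly convex, so the proof of Proposition~\ref{henkinstrconv} applies verbatim and yields a uniformly $L^1$ kernel with bound $\lesssim \|f\|_\infty$. Over $S_2$ I would imitate the proof of Proposition~\ref{pro:expabs}: its computation expresses $|\re F(z,\z)|$ in terms of $\chi$-differences and applies Lemma~\ref{conv}, and in our setting the relevant convex function is $\tau\mapsto \chi(1+\tau)-1 = \exp(-1/\tau^{\alpha/2})$ on $(0,\e)$, with $p = |\z_1|^2 - 1$ and $q = |z_1|^2 - 1$. Since $\chi'(1+\cdot), \chi''(1+\cdot), \chi'''(1+\cdot) \ge 0$ by the definition of $\chi$, Lemma~\ref{conv} yields the same two-sided lower bound on $|\re F|$ as in Proposition~\ref{pro:expabs}, and the remainder of the integrability argument (requiring $\alpha<1$) is identical.

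Over $S_1$, the vanishing $\rho_{\z_1}=0$ together with $\rho_{\z_2}=\cl\z_2\neq 0$ collapses the kernel to
\[
\frac{-(\cl\z_1-\cl z_1)}{(z_2-\z_2)|\z-z|^2}\, f_1\, d\cl\z_1 \wedge d\z_1 \wedge d\z_2,
\]
which is structurally identical to the integrand $A_3$ in the proof of Proposition~\ref{henkin2}. I would repeat the Stokes' theorem argument from that proof, fibered over the circle $|\z_2|^2 = 3$: for fixed $\z_2$, apply Stokes on $\set{|\z_1|<1}\setminus B(z_1,\e)$, invoke $\dbar f = 0$ to rewrite $\p f_1/\p\cl\z_2 = \p f_2/\p\cl\z_1$, and let $\e\to 0$. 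This converts the $S_1$ integral into (i) a residue term at $\z_1=z_1$, which is an explicit Cauchy integral in $\z_2$, (ii) a volume integral on $\set{|\z_1|<1,|\z_2|^2<3}$ combining with the corresponding restriction of $Kf$ and controlled by the bidisc-type inequality $|\z-z|^2 \gtrsim |\z_1-z_1|^{2/3}|\z_2-z_2|^{4/3}$ from the end of Proposition~\ref{henkin2}, and (iii) a new boundary integral on the torus $T = S_1\cap S_2 = \set{|\z_1|=1,|\z_2|^2=3}$.

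The main obstacle is term (iii), which has no counterpart in Proposition~\ref{henkin2} (where the analogous torus is the distinguished boundary between two flat faces). I expect to handle it either by a direct bidisc-style $L^1$ bound, since $T$ is transverse to the infinite-type direction of $\chi$ and the exponential degeneracy therefore does not enter, or else by a second integration by parts on $S_2$ with respect to $\z_2$, converting the $T$-integral into interior contributions absorbable into the $S_2$ estimate already controlled in the previous step. Verifying one of these two routes is the delicate technical step that completes the proof, after which combining the bounds on $S_1, S_2, S_3$ yields $\|u\|_\infty \lesssim \|f\|_\infty$.
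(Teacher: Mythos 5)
Your splitting of $\p\W$ into the flat face, the transitional band and the strongly convex piece is the same as the paper's $P_1\cup P_2\cup P_3$, and the treatment of $S_3$ and of the volume term is fine. The genuine gap is your claim that over $S_2$ the argument of Proposition \ref{pro:expabs} applies ``identically'' with $p=|\z_1|^2-1$, $q=|z_1|^2-1$. Two things break. First, $q$ may be negative, since $z$ can lie over the flat disc $|z_1|\le 1$ while $\z\in S_2$; Lemma \ref{conv} does not apply as stated there, and in that regime the best available lower bound is $|\re F|\gtrsim \phi''(x/2)(x/2)^2$ with $x=|\z_1|^2-1$, a quantity depending on $\z_1$ alone and completely decoupled from $|z_1-\z_1|$. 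Second, even for $q\ge 0$ the degeneracy now sits on the whole circle $|\z_1|=1$ rather than at the single point $z_1=0$, so the step used for $II$ in Proposition \ref{pro:expabs}, namely $(|z_1|^2+|\z_1|^2)^{-\alpha/2}\lesssim |z_1-\z_1|^{-\alpha}$, has no analogue: $(|z_1|^2-1)+(|\z_1|^2-1)$ can vanish while $|z_1-\z_1|\approx 1$. The resulting kernel bound has the form $|\ln(|\z_1|^2-1)|\,(|\z_1|^2-1)^{-\alpha/2}|z_1-\z_1|^{-1}$, with the two singular factors concentrated on different sets, and its integrability is precisely what the paper has to prove by new means: a lemma giving three lower bounds for $|\re F|$ in terms of $y=\re\bar\z_1 z_1-1$ versus $x=|\z_1|^2-1$ (the case $y\le 0$ being the new one), followed for $A_3$ by the Young-type splitting $ab\lesssim a^{p_0}+b^{q_0}$ with $p_0\alpha/2<1$, $q_0<2$, and polar coordinates in $|\z_1|$ --- this is exactly where $\alpha<1$ enters for this piece. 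You also skip that on $P_2$ one has $\rho_{\z_2}=\bar\z_2$ rather than a constant, so the $z_2$-part of $\re F$ is $\re\bar\z_2 z_2-|\z_2|^2$ and needs its own convexity argument (the paper's inequality $\phi(|y|)\le 3-\re\bar\z_2 z_2$, using $\z\in\p\W$, $z\in\W$), which has no counterpart in the graph model $\re z_2+\phi(|z_1|^2)<0$ of Proposition \ref{pro:expabs}. In short, the part of the proof you declare finished is the part where all of the paper's new work lies.

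On the flat face, the mechanics as written are off and the conclusion is left open. The collapsed kernel on $P_1$ carries its Cauchy singularity in $\z_2$, so the Stokes/residue step must be performed in the $\z_2$-variable, opening the circle $|\z_2|=\sqrt3$ into the disc with a residue at $\z_2=z_2$, exactly as in the computation of $A_3$ in Proposition \ref{henkin2}; applying Stokes in $\z_1$ on $\set{|\z_1|<1}\sm B(z_1,\e)$ as you propose does not make sense, since the restricted integrand is already a top-degree form in $\z_1$. More importantly, you leave the interface term on $\set{|\z_1|=1,\ |\z_2|<\sqrt3}$ as an unverified ``delicate step''. It is in fact harmless: it has the same shape as the boundary terms in (\ref{bidisk}) and is absolutely integrable by the pointwise bound $|\z-z|^2\gtrsim|\z_1-z_1|^{4/3}|\z_2-z_2|^{2/3}$ (the estimate at the end of Proposition \ref{henkin2} with the roles of $\z_1$ and $\z_2$ exchanged). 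So this part is fixable, but as it stands the proposal is incomplete there and incorrect in its central claim about $S_2$.
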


\begin{proof} Note that \begin{multline}
\p\W=\set{\abs{z_{1}}\le1,\abs{z_{2}}^{2}=3}\\
\frac{}{}\cup\set{1<\abs{z_{1}}^{2}<1+a,\,\chi(\abs{z_{1}}^{2})+\abs{z_{2}}^{2}=4}\\
\cup\set{\abs{z_{1}}^{2}>1+a,\,\abs{z_{1}}^{2}+\abs{z_{2}}^{2}=4+\eta}=P_{1}\cup P_{2}\cup P_{3}.\end{multline}
 Solve $\dbar u=f=f_{1}d\cl z_{1}+f_{2}d\cl z_{2}$ on $\W$ using
Henkin integral:

\begin{multline}
4\pi^{2}u(z)=\int_{\p\W}\frac{\rho_{\z_{1}}(\cl\z_{2}-\cl z_{2})-\rho_{\z_{2}}(\cl\z_{1}-\cl z_{1})}{F(z,\z)\abs{\z-z}^{2}}f\wedge d\z_{1}\wedge d\z_{2}\\
+\int_{\W}\frac{f_{1}(\cl\z_{1}-\cl z_{1})+f_{2}(\cl\z_{2}-\cl z_{2})}{\abs{\z-z}^{4}}d\cl\z_{1}\wedge d\cl\z_{2}\wedge d\z_{1}\wedge d\z_{2}\\
=\int_{P_{1}}\cdots+\int_{P_{2}}\cdots+\int_{P_{3}}\cdots+\int_{\W}\cdots=u_{1}+u_{2}+u_{3}+u_{4},\end{multline}
 where \[
F(z,\z)=\rho_{\z_{1}}(z_{1}-\z_{2})+\rho_{\z_{2}}(z_{2}-\z_{2}).\]

We shall show that \[
\norm{u_{j}}_{\infty}\lesssim\norm f_{\infty},\quad j=1,2,3,4.\]
 The proof of  $\norm{u_{4}}_{\infty}\lesssim\norm f_{\infty}$ has been included in the literature. The estimates on $u_2$ and $u_3$ are also omitted here, as they are exactly the same as in Proposition \ref{henkin2} and Proposition \ref{henkinstrconv}.

For the estimate of $\norm{u_{2}}_{\infty}$, we first compute \begin{align*}
 & F(z,\z)=\frac{\p\rho}{\p\z_{1}}(\z)(z_{1}-\z_{1})+\frac{\p\rho}{\p\z_{2}}(\z)(z_{2}-\z_{2})\\
 & =e^{-\frac{1}{(\abs{\z_{1}}^{2}-1)^{\frac{\alpha}{2}}}}\frac{\alpha\bar{\z}_{1}(z_{1}-\z_{1})}{2(\abs{\z_{1}}^{2}-1)^{\frac{\alpha}{2}+1}}+\bar{\z}_{2}(z_{2}-\z_{2}).\end{align*}



Notice on $P_2$, it's strictly convex except at those points $(\z_1, \z_2)$ so that $(|\z_{1}|,|\z_{2}|)=(1,\sqrt{3})$,
The integrand in $u_{2}$ becomes most singular when $(z_{1},z_{2})$
is near those boundary points. From now on we assume
$(|z_{1}|,|z_{2}|)\in \bar\W, (|\z_{1}|,|\z_{2}|)\in \p\W$ are both close to $(1,\sqrt{3})$. We can also assume that $f=f_{1}d\bar{z}_{1}$. Indeed, since $\rho=0$, $
\rho_{\z_{1}}d\z_{1}+\rho_{\bar{\z}_{1}}d\bar{\z}_{1}+\rho_{\z_{2}}d\z_{2}+\rho_{\bar{\z}_{2}}d\bar{\z}_{2}=0$ {on} $\ \p\Omega.$
 Therefore $
d\bar{\z}_{2}=d_{1}d\z_{1}+d_{2}d\bar{\z}_{1}+d_{3}d\z_{2},$
 where $d_{1},d_{2}=o(1)$. Then $
d\bar{\z}_{2}\wedge d\z_{1}\wedge d\z_{2}=o(1)d\bar{\z}_{1}\wedge d\z_{1}\wedge d\z_{2}.$

\begin{align*}
|A| & \le\|f\|_{\infty}\int_{1<\abs{\z_{1}}<1+\epsilon,\chi(|\z_{1}|^{2})+|\z_{2}|^{2}=4}\frac{1}{|F||z_{1}-\z_{1}|}d\bar{\z}_{1}\wedge d\z_{1}\wedge d\z_{2}\\
 & \ \ +\|f\|_{\infty}\int_{1+\epsilon<\abs{\z_{1}}<1+a,\chi(|\z_{1}|^{2})+|\z_{2}|^{2}=4}\frac{1}{|F||z_{1}-\z_{1}|}d\bar{\z}_{1}\wedge d\z_{1}\wedge d\z_{2}:=I+II.\\
 I& \le\|f\|_{\infty}\int_{1<\abs{\z_{1}}<1+\epsilon,\chi(|\z_{1}|^{2})+|\z_{2}|^{2}=4}\frac{1}{|F||z_{1}-\z_{1}|}d\bar{\z}_{1}\wedge d\z_{1}\wedge d\z_{2}\\
 & =\|f\|_{\infty}\int_{1<\abs{\z_{1}}<1+\epsilon,\chi(|\z_{1}|^{2})+|\z_{2}|^{2}=4}\frac{1}{|z_{1}-\z_{1}|(|\re F|+|\im F|)}d\z_{2}\wedge d\bar{\z}_{1}\wedge d\z_{1}.\\
 II & \lesssim \|f\|_{\infty}.
 \end{align*}

Since $\frac{\p\im F}{\p\z_{2}}\approx1$ as $\z_{2}\approx\sqrt{3}$,
by taking change of coordinates, we can assume the last integration
is actually over $d\im F\wedge d\bar{\z}_{1}\wedge d\z_{1}$. Therefore

\begin{align*}
I & \le\|f\|_{\infty}\int_{1<\abs{\z_{1}}<1+\epsilon}\frac{\ln(|\re F|)}{|z_{1}-\z_{1}|}d\bar{\z}_{1}\wedge d\z_{1}.\end{align*}

Here, \begin{align}\label{mm}
\re F & =e^{-\frac{1}{(\abs{\z_{1}}^{2}-1)^{\frac{\alpha}{2}}}}\frac{\alpha\re(\bar{\z}_{1}(z_{1}-\z_{1}))}{2(\abs{\z_{1}}^{2}-1)^{\frac{\alpha}{2}+1}}+\re(\bar{\z}_{2}z_{2})-3+e^{-\frac{1}{(\abs{\z_{1}}^{2}-1)^{\frac{\alpha}{2}}}}.
\end{align}
We need the following lemma:

\begin{lem}
Let $\re F$ be given by (\ref{mm}) and let $\phi(t)=e^{-\frac{1}{t^{\frac{\alpha}{2}}}}$ for $t>0$, $\phi(0)=0$. Denote $y=\re\bar{\z}_{1}z_{1}-1$
and $x=|\z_{1}|^{2}-1$. Then \begin{equation}\label{secl}|\re F|\ge \begin{cases} \phi(y-x), & \text{if}\ \  y\ge x\ge 0 \\
\phi''\br{\frac{x-y}{2}}\br{\frac{x-y}{2}}^{2}, & \text{if}\ \ 0\le y\le x\\
\phi''\br{\frac{x}{2}}\br{\frac{x}{2}}^{2}, & \text{if}\ \  y\le0\end{cases}.\end{equation}
\end{lem} \begin{proof}
Since $\z\in P_2,z\in\Omega$, we have \begin{align*}
 & \phi(|\z_{1}|^{2}-1)+|\z_{2}|^{2}-3=0,\\
 & \phi(\big||z_{1}|^{2}-1\big|)+|z_{2}|^{2}-3<0.
 \end{align*}
 Notice $\re\bar{\z}_{1}z_{1}\le \frac{1}{2}|\z_{1}|^{2}+\frac{1}{2}|z_{1}|^{2}$,
$\re\bar{\z}_{2}z_{2}\le \frac{1}{2}|\z_{2}|^{2}+\frac{1}{2}|z_{2}|^{2}$.
Moreover, $\phi$ is increasing and convex when $t$ is close to
$0^+$. So \begin{align*}
\phi(|y|) & \le \phi(\frac{1}{2}(|\z_{1}|^{2}-1)+\frac{1}{2}\big||z_{1}|^{2}-1)\big|)\le\frac{1}{2}\phi(|\z_{1}|^{2}-1)+\frac{1}{2}\phi(\big||z_{1}|^{2}-1\big|)\\
 & \le \frac{1}{2}(3-|\z_{2}|^{2})+\frac{1}{2}(3-|z_{2}|^{2})\\
 & =3-(\frac{1}{2}|\z_{2}|^{2}+\frac{1}{2}|z_{2}|^{2})\\
 & \le 3-\re\bar{\z}_{2}z_{2}.\end{align*}
 Therefore if $y\ge0$, \begin{align*}
\re F & =\phi'(x)(y-x)+\re(\z_{2}z_{2})-3+\phi(x)\\
 & \le \phi'(x)(y-x)-\phi(y)+\phi(x).\end{align*}
 Applying Lemma \ref{conv} to the above, we have the estimates in (\ref{secl}).

 If $y\le 0$,
 \begin{align*}
\re F & =\phi'(x)(y-x)+\re(\z_{2}z_{2})-3+\phi(x)\\
 & = \phi'(x)(0-x)-\phi(0)+\phi(x)\\
 & \le  -\phi''\br{\frac{x}{2}}\br{\frac{x}{2}}^{2} .\end{align*} \end{proof}

We then
have \begin{align*}
I\lesssim &\|f\|_{\infty}\{\int_{1<\abs{\z_{1}}<1+\epsilon, y\ge x}\frac{1}{|\re (\bar\z_1z_1-|\z_1|^2)|^{\frac{\alpha}{2}}|z_1-\z_1|} d\bar\z_1\wedge d\z_1 \\
&\int_{1<\abs{\z_{1}}<1+\epsilon, 0\le y \le x}\frac{|\ln(|\z_1|^2-\re(\bar\z_1z_1))|}{(|\z_1|^2-\re (\bar\z_1z_1))^{\frac{\alpha}{2}}|z_1-\z_1|} d\bar\z_1\wedge d\z_1\\
& +\int_{1<\abs{\z_{1}}<1+\epsilon, y\le 0}\frac{\big|\ln(|\z_1|^2-1)|\big|}{ (|\z_1|^2-1)^{\frac{\alpha}{2}}|z_1-\z_1|} d\bar\z_1\wedge d\z_1\}\\
:=&\|f\|_{\infty}(A_1+A_2+A_3).\end{align*}
Here,
\begin{align*}
  A_1&\le \{\int_{1<\abs{\z_{1}}<1+\epsilon, y\ge x}\big[\re (\frac{1}{\bar\z_1}\frac{1}{z_1-\z_1})\big]^{\frac{\alpha}{2}}|\frac{1}{z_1-\z_1}| d\bar\z_1\wedge d\z_1\\
  &\le \{\int_{1<\abs{\z_{1}}<1+\epsilon}\big[\frac{1}{|\bar\z_1|}\frac{1}{|z_1-\z_1|}\big]^{\frac{\alpha}{2}} |\frac{1}{z_1-\z_1}|d\bar\z_1\wedge d\z_1\\
  &\lesssim \{\int_{1<\abs{\z_{1}}<1+\epsilon}|\frac{1}{z_1-\z_1}|^{1+\frac{\alpha}{2}} d\bar\z_1\wedge d\z_1<\infty\\
  A_2&\le \int_{1<\abs{\z_{1}}<1+\epsilon}\frac{|\ln|\z_1-z_1||}{|\z_1-z_1|^{1+\frac{\alpha}{2}}} d\bar\z_1\wedge d\z_1<\infty,\\
\end{align*}
 when $\alpha<1$.

For $A_3$, we need to use the inequality $ a^{p}+b^q\gtrsim ab$ for $ a, b\ge 0$ and $1/p+1/q=1, p, q>1$. Now since $\alpha<1$, we can choose  $p_0>2$ so that $\frac{p_0\alpha}{2}<1$ and $q_0$ satisfy $1/p_0+1/q_0=1$. Then $q_0<2$.
\begin{align*}
  A_3&\le  \int_{1<\abs{\z_{1}}<1+\epsilon}\frac{\big|\ln(|\z_1|^2-1)|\big|^{p_0}}{ (|\z_1|^2-1)^{\frac{p_0\alpha}{2}}}d\bar\z_1\wedge d\z_1+\int_{1<\abs{\z_{1}}<1+\epsilon}\frac{1}{|z_1-\z_1|^{q_0}} d\bar\z_1\wedge d\z_1\\
  &\le \int_{1<\abs{\z_{1}}<1+\epsilon}\frac{\big|\ln(|\z_1|^2-1)|\big|^{p_0}}{ (|\z_1|^2-1)^{\frac{p_0\alpha}{2}}}d\bar\z_1\wedge d\z_1+const.
  \end{align*}

In the first integral, take the polar coordinates $\z=re^{i\theta}$. Then
\begin{align*}
  A_3&\lesssim \int_{1<r<1+\epsilon}\frac{|\ln(r^2-1)|^{p_0}}{(r^2-1)^{\frac{p_0\alpha}{2}}}dr+const\\
   & \lesssim \int_{1<r<1+\epsilon} \frac{|\ln(r-1)|^{p_0}}{(r-1)^{\frac{p_0\alpha}{2}}}dr+const< \infty ,
\end{align*}
by our choices of $p_0, q_0$. \end{proof}

\medskip{}

One can similarly consider the domain obtained by rounding off the
corners of a bidisc. See the following example:

\begin{example} Let \[
\chi(t)=\bcases1-a,&t\le1-a\\
k\exp\br{-\Frac{1}{(t^{2}-(1-a)^{2})^{\frac{\alpha}{2}}}}+1-a,&t>1-a,\ecases\]
 where $a>0$ is a small constant such that $\chi(t)$ is convex on
$[0,1]$ and $k$ is a constant chosen such that $\chi(1)=1$, i.e.,
\[
k=a\cdot\exp\br{\frac{1}{(2a-a^{2})^{\frac{\alpha}{2}}}}.\]

Let \[
\W=\set{\rho(z)=\chi(\abs{z_{1}})+\chi(\abs{z_{2}})-2+a<0}\subset\C^{2}.\]

\end{example}

As we can see, the boundary of $\W$ consists of the following sets:
\begin{multline}
\p\W=\set{\abs{z_{1}}\le1-a,\,\abs{z_{2}}=1}\cup\set{\abs{z_{1}}=1,\,\abs{z_{2}}\le1-a}\\
\cup\set{\abs{z_{1}},\abs{z_{2}}>1-a,\,\rho(z_{1},z_{2})=0}=P_{1}\cup P_{2}\cup P_{3}.\end{multline}

Integrals on flat pieces $P_{1}$ and $P_{2}$ fall into the Henkin's
bidisc situation. The supernorm estimates of the integral over rounding
off piece $P_{3}$ can be carried out the same way as we discussed in Example
1 when $\alpha<1$.

\bigskip

\noindent John Erik Forn\ae ss\\
Mathematics Department\\
The University of Michigan\\
East Hall, Ann Arbor, MI 48109\\
USA\\
fornaess@umich.edu\\

\noindent Lina Lee\\
Mathematics Department\\
The University of Michigan\\
East Hall, Ann Arbor, MI 48109\\
USA\\
linalee@umich.edu\\

\noindent Yuan Zhang\\
Mathematics Department\\
University of California, San Diego\\
9500 Gilman Drive, La Jolla, CA, 92093\\
USA\\
yuz009@math.ucsd.edu\\


\begin{thebibliography}{7}
\bibitem{DF} Diederich, Klas; Forn{æ}ss, John Erik. {\em Support
functions for convex domains of finite type.} Math. Z. \textbf{230}
(1999), no. 1, 145--164.

\bibitem{DFF} Diederich, Klas; Fischer, Bert; Forn{æ}ss, John Erik.
{\em Holder estimates on convex domains of finite type} Math. Z.
\textbf{232} (1999), no. 1, 43--61.

\bibitem{GL} Grauert, Hans; Lieb, Ingo. Das Ramirezsche Integral
und die Lösung der Gleichung $\bar{\partial}f=\alpha$ im Bereich
der beschränkten Formen. (German) Rice Univ. Studies 56 1970 no. 2,
29--50 (1971).

\bibitem{henkin} Henkin, G. M. {\em Integral representation of
functions in strictly pseudoconvex domains and applications to the
$\dbar$-problem} Mat. Sbornik. \textbf{124} (1970), no. 2.

\bibitem{henkin2} Henkin, G. M. {\em A uniform estimate for the
solution of the $\bar{\partial}$-problem in a Weil region}. (Russian)
Uspehi Mat. Nauk \textbf{26} (1971), no. 3(159), 211--212.

\bibitem{henkin69} Henkin, G. M. {\em Integral representations
of functions holomorphic in strictly pseudoconvex domains and some
applications} Mat. Sb. \textbf{78} (120) (1969). 611-632.

\bibitem{Hormander} Hörmander, L. {\em $L^{2}$ estimates and existence
theorems for the $\bar{\p}$ operator.} ActaMath. \textbf{113} 1965.
89.

\bibitem{Kerzman} Kerzman, N. {\em H\"{o}lder and $L\sp{p}$ estimates for solutions of $\bar \partial u=f$ in strongly pseudoconvex domains.} Comm. Pure Appl. Math. \textbf{24} 1971 301--379.

\bibitem{Range} Range, R. M. {\em Integral kernels and H\"{o}lder estimates for $\overline\partial$ on pseudoconvex domains of finite type in $C\sp 2$.}
Math. Ann. 288 (1990), no. 1, 63--74.


\end{thebibliography}
\end{document}